\documentclass[11pt,a4paper]{article}
\usepackage{amsfonts}
\usepackage{amsmath}
\usepackage{amssymb}
\usepackage{amsthm}
\usepackage{authblk}
\usepackage{booktabs}
\usepackage{bbm}
\usepackage{bm}
\usepackage{caption}
\usepackage{color}
\usepackage{dsfont}
\usepackage{fancyhdr}
\usepackage{float}
\usepackage[letterpaper,top=2.5cm,bottom=2.5cm,left=2.5cm,right=2.5cm,marginparwidth=1.75cm]{geometry}
\usepackage{graphicx}
\usepackage{hyperref}
\usepackage{cleveref}
\usepackage{mathrsfs}
\usepackage{multirow}
\usepackage{natbib}
\usepackage{subfigure}
\usepackage{yfonts}

\DeclareMathAlphabet{\mathpzc}{OT1}{pzc}{m}{it}
\makeatletter

\def\BC{\mathbb C}
\def\BN{\mathbb N}
\def\BR{\mathbb R}
\def\BZ{\mathbb Z}

\def\cD{\mathcal D}
\def\cF{\mathcal F}
\def\rRe{\mathrm{Re}}
\def\T{\mathrm T}
\def\rd{\mathrm d}
\def\e{\mathrm e}

\def\Ga{\Gamma}

\def\Om{\Omega}
\def\al{\alpha}
\def\be{\beta}
\def\ga{\gamma}
\def\de{\delta}

\def\ka{\kappa}
\def\la{\lambda}
\def\si{\sigma}
\def\vp{\varphi}
\def\om{\omega}
\def\f{\frac}
\def\nb{\nabla}
\def\ov{\overline}
\def\pa{\partial}
\def\tri{\triangle}

\numberwithin{equation}{section}
\newtheorem{thm}{Theorem}[section]

\newtheorem{ex}{Example}

\newtheorem{prob}[thm]{Problem}

\newtheorem{lem}[thm]{Lemma}

\title{Simultaneous Recovery of Multiple Parameters in Nonlocal Diffusion Equations from Internal Measurements}
\author[a]{Kai Yu}
\author[a]{Zhiyuan Li}
\author[b]{Yikan Liu\thanks{Corresponding author. E-mail: liu.yikan.8z@kyoto-u.ac.jp}}
\affil[a]{School of Mathematics and Statistics, Ningbo University, Ningbo 315211, China}
\affil[b]{Department of Mathematics, Kyoto University, Kitashirakawa-Oiwakecho, Sakyo-ku, Kyoto 606-8502, Japan}

\date{}

\begin{document}

\maketitle

\begin{abstract}

This paper is devoted to simultaneously recovering multiple parameters from internal measurements for nonlocal diffusion equations. The uniqueness of the inverse problem is established by employing the asymptotic behavior of solutions, analytic continuation, the Laplace transform, and properties of analytic functions. For numerical reconstruction, we apply the Levenberg-Marquardt method to obtain a stable approximate solution of the inverse problem. Numerical examples are provided to demonstrate the efficiency of the proposed algorithm and to validate our theoretical findings.\medskip

\noindent{\bf Keywords } Nonlocal diffusion equation, inverse problem, uniqueness, internal measurement\medskip

\noindent{\bf Mathematics Subject Classifications } 35R30, 35R11, 26A33
\end{abstract}


\section{Introduction}

In recent years, nonlocal diffusion equations have become powerful mathematical tools with extensive applications across physics, chemistry, biology, and biochemistry \cite{2000The,2002Waiting-times,2002Subdiffusion-limited}. These equations effectively capture complex phenomena through their inherent memory effects, nonlocal interactions, and anomalous diffusion characteristics. The temporal fractional derivative models particle sticking and trapping behaviors, whereas the spatial fractional derivative describes long-range particle jumps. The interplay between these two components produces concentration profiles with sharper peaks and heavier tails, clearly distinguishing them from classical diffusion profiles. Moreover, space-time fractional diffusion equations offer a more precise framework than purely time-fractional diffusion equations for describing certain physical processes, particularly those involving coupled memory and jump dynamics.

Let $\Om$ be a bounded domain in $\BR^d$ with a sufficiently smooth boundary $\pa\Om$ and $T>0$ be a fixed final time. In this paper, we consider the following nonlocal diffusion problem
\begin{equation}\label{eq1.1}
\begin{cases}
\pa_t^\al(u(x,t)-a(x))+(-\tri)^\be u(x,t)=F(x,t), & (x,t)\in\Om\times(0,T),\\
u(x,t)=a(x), & (x,t)\in\ov\Om\times\{0\},\\
u(x,t)=0, & (x,t)\in\pa\Om\times(0,T),
\end{cases}
\end{equation}
where $\al,\be\in(0,1)$ are constants. Here $\pa_t^\al$ denotes the Caputo derivative of order $\al$ with respect to $t$ defined by
\[
\pa_t^\al u(x,t):=\f1{\Ga(1-\al)}\int_0^t\f{\pa u(x,\tau)}{\pa\tau}\f{\rd\tau}{(t-\tau)^\al},\quad t>0.
\]
The Riemann-Liouville integral operator of order $1-\al$ is given by the formula
\[
I_t^{1-\al}u(x,t):=\int_0^t\f{(t-\tau)^{-\al}}{\Ga(1-\al)}u(x,\tau)\,\rd\tau.
\]
We also define the integral with an order $0$ as the identity operator, i.e., $I_t^0u=u$. The Riemann-Liouville fractional derivative of the order $\al>0$ is defined by
\[
D_t^\al u(t)=\f\rd{\rd t}\int_0^t\f{(t-\tau)^{-\al}}{\Ga(1-\al)}u(\tau)\,\rd\tau,
\]
where $\Ga$ is the Gamma function.

The fractional Laplacian $(-\tri)^\beta$ is defined via the spectral decomposition of the Laplace operator; a detailed summary of the definition can be found in Section \ref{sec2}, and we refer to \cite{2016Simultaneous,2015An inverse} for further background. Note that as $\alpha,\beta\to 1$, the Caputo derivative $\pa_t^\alpha u$ tends to the first-order derivative $u_t$ and $(-\tri)^\beta$ tends to $-\tri$; consequently, model \eqref{eq1.1} reduces to the standard diffusion equation.

Now we assume that the source term $F$ in \eqref{eq1.1} is of the separated form
\[
F(x,t)=p(t)f(x),
\]
where $p(t)$ and $f(x)$ stand for the temporal and the spatial component of the source term, respectively. Our main objective is to prove uniqueness in the simultaneous identification of multiple parameters for \eqref{eq1.1} from internal measurements. We consider the following inverse problem.

\begin{prob}\label{prob-isp}
Let $p(t)$ be given and $\om$ be a nonempty subdomain of $\Om$. Under certain assumptions, can we uniquely determine $\al,$ $\be,$ $f(x)$ and $a(x)$ simultaneously by the partial interior observation data of $u$ in $\om\times(0,T)?$
\end{prob}

Inverse problems for nonlocal diffusion models have been intensively studied over the past decades, with particular emphasis on time-fractional diffusion equations due to their ability to capture memory effects and anomalous transport phenomena in a wide range of scientific and engineering applications. Since the literature on time-fractional diffusion equations is vast, we only highlight a few representative works here. In the context of single-parameter inversion, substantial contributions have been made; see, e.g., \cite{2019Inverse problems li,2019Inverse li,2019Inverse liu,2011Initial}. More recently, the focus has shifted toward multi-parameter inversion, which better reflects the complexity of real-world systems; see, e.g., \cite{2023RecoveryCen,2021The,2022Uniqueness,2021Simultaneous}.

As mentioned above, nonlocal diffusion equations with time-fractional derivatives have been widely studied. However, to the best of the authors’ knowledge, inverse problems for nonlocal diffusion equations involving both space- and time-fractional derivatives have received relatively little attention. Within the existing studies on this topic, single-parameter inversions, such as recovering fractional orders or diffusion coefficients, have been investigated rather extensively. In particular, the inverse source problem, which aims to identify an unknown source term from boundary or interior measurements, has been widely examined; see, e.g., \cite{2018Inverse,2017Harnack's,2018An,2015An inverse}. Other parameter identification problems can be found in \cite{2020Inverse problems,2018Backward,2020The Calderon,2021Direct}. Beyond single-parameter inversion, recent research has moved toward more complex scenarios involving multi-parameter inversion, where several unknown quantities must be determined simultaneously. Notably, Tatar et al. \cite{2016Simultaneous} were the first to simultaneously identify the time- and space-fractional orders in a space-time fractional diffusion equation, while Guerngar et al. \cite{2021SimultaneousGuerngar} also recovered the fractional exponents for such equations. Zhang et al. \cite{2018Bayesian} adopted a Bayesian approach for inverse problems in time-space fractional diffusion equations. Janno \cite{2020Determination} extended the investigation to time-dependent sources and parameters in nonlocal diffusion and wave equations using final observation data, while Malik et al. \cite{2021SimultaneousMalik} developed methods for concurrently determining source terms and diffusion concentrations in multi-term space-time fractional diffusion systems. Moreover, several efficient numerical reconstruction algorithms have been proposed; see, e.g., \cite{2024Recovering,2023The quasi-reversibility,2021Unknown}. Despite these advances, research on multi-parameter inversion for space-time fractional equations remains far from sufficient. This gap motivates the present work.

This paper seeks to contribute to the understanding of multi-parameter inversion for space-time fractional diffusion equations by establishing a uniqueness result and a reconstruction algorithm. Specifically, we focus on simultaneously recovering the spatial component of the source term, the initial value, and the fractional orders in space and time from internal measurements. The inverse problem is solved numerically using the Levenberg-Marquardt method.

The remainder of the paper is organized as follows. Section \ref{sec2} recalls some preliminaries needed for the subsequent analysis. In Section \ref{sec3}, we prove the uniqueness of the inverse problem. Section \ref{sec4} presents the Levenberg-Marquardt method for numerical reconstruction, and Section \ref{sec5} reports numerical results to illustrate its performance. Finally, Section \ref{sec6} concludes the paper.


\section{Preliminary and the main  result}\label{sec2}

In this section, we first set up notations and the terminology, and review some standard facts on the fractional calculus. Let $L^2(\Om)$ be a usual $L^2$-space with the inner  product $(\,\cdot\,,\,\cdot\,)$ and $H^1_0(\Om)$, $H^2(\Om)$, etc.\! denote the usual Sobolev spaces. By ${H^\al(0,T)}$, we denote the fractional Sobolev space with the norm
\[
\|\psi\|_{H^\al(0,T)}=\left(\|\psi\|_{L^2(0,T)}^2+\int_0^T\!\!\!\int_0^T\f{|\psi(t)-\psi(\tau)|^2}{|t-\tau|^{1+2\al}}\,\rd\tau\rd t\right)^{1/2}
\]
(e.g., Adams \cite{AF03}). Next, $H_\al(0,T)$ is a subspace of $H^\al(0,T)$ defined by
\[
H_\al(0,T):=\left\{\begin{alignedat}{2}
& \{\psi\in H^\al(0,T)\mid\psi(0)=0\}, &\quad & 1/2<\al<1,\\
&\left\{\psi\in H^{1/2}(0,T)\mid\int_0^T\f{|\psi(t)|^2}t\,\rd t<\infty\right\}, &\quad & \al=1/2,\\
& H^\al(0,T), &\quad & 0<\al<1/2,
\end{alignedat}\right.
\]
equipped with the norm
\[
\|\psi\|_{H_\al(0,T)}:=\left\{\begin{alignedat}{2}
& \|\psi\|_{H^\al(0,T)}, &\quad & 0<\al<1,\ \al\ne1/2,\\
&\left(\|\psi\|_{H^{1/2}(0,T)}^2+\int_0^T\f{|\psi(t)|^2}t\,\rd t\right)^{1/2}, &\quad & \al=1/2
\end{alignedat}\right.
\]
(see e.g.\! Gorenflo, Luchko and Yamamoto \cite{GL15}).

In the spatial direction, We equip $-\tri$ with the homogeneous Dirichlet boundary condition and specify its domain as $D(-\tri)=H^2(\Om)\cap H_0^1(\Om)$. We denote the eigensystem of the operator $-\tri$ with homogeneous Dirichlet boundary condition as $\{(\la_n,\vp_n)\}_{n=1}^\infty$, namely
\[
\begin{cases}
-\tri\vp_n=\la_n\vp_n & \mbox{in }\Om,\\
\vp_n=0 &\mbox{on }\pa\Om.
\end{cases}
\]
It is known that $0<\la_1<\la_2\le\cdots$ and $\{\vp_n\}_{n=1}^\infty$ forms a complete orthonormal basis of $L^2(\Om)$. Then we can define the fractional Sobolev space $\cD((-\tri)^\ga)$ and the corresponding fractional Laplacian operator $(-\tri)^\ga$ with $\ga>0$ as 
\begin{gather*}
D((-\tri)^\ga):=\left\{\psi\in L^2(\Om)\mid\|\psi\|_{\cD((-\tri)^\ga)}:=\|(-\tri)^\ga\psi\|_{L^2(\Om)}<\infty\right\},\\
(-\tri)^\ga\psi:=\sum_{n=1}^\infty\la_n^\ga(\psi,\vp_n)\vp_n.
\end{gather*}

Next, we invoke the Mittag-Leffler function
\[
E_{\al,\be}(z):=\sum_{k=0}^\infty\f{z^k}{\Ga(\al k+\be)},\quad z\in\BC,\ \al>0,\be\in\BR
\]
and recall the following frequently used fact.

\begin{lem}[see \cite{1998Fractional}]\label{lem2.13}
If $\al,\be\in(0,1),$ then $0<E_{\al,\be}(-\eta)\le1$ for any $\eta\ge0$.
\end{lem}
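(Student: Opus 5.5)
The cleanest route to Lemma \ref{lem2.13} is through complete monotonicity. For $0<\al\le1$ and $\be\ge\al$, the function $\eta\mapsto E_{\al,\be}(-\eta)$ is completely monotone on $[0,\infty)$. Once this is known, Bernstein's theorem writes it as the Laplace transform of a nonnegative measure,
\[
E_{\al,\be}(-\eta)=\int_0^\infty \e^{-\eta r}\,\rd\mu_{\al,\be}(r).
\]
The measure is not identically zero, because $E_{\al,\be}(0)=1/\Ga(\be)>0$. Hence the integral is strictly positive for every $\eta\ge0$, and it is nonincreasing in $\eta$.

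To obtain the representation concretely, I would start from the Laplace transform identity
\[
\int_0^\infty \e^{-st}t^{\be-1}E_{\al,\be}(-\eta t^\al)\,\rd t=\f{s^{\al-\be}}{s^\al+\eta}.
\]
Inverting along a Hankel contour gives, for $0<\al<1$,
\[
E_{\al,\be}(-\eta)=\f1\pi\int_0^\infty \e^{-r\eta^{1/\al}}K_{\al,\be}(r)\,\rd r,
\]
with an explicit kernel
\[
K_{\al,\be}(r)=\f{1}{\al}\,\f{r^{\al-\be}\bigl(r^\al\sin(\be\pi)-\sin((\be-\al)\pi)\bigr)}{r^{2\al}-2r^\al\cos(\al\pi)+1}\cdot\text{(scaling)}.
\]
The key step, and the main obstacle, is checking that $K_{\al,\be}\ge0$ when $\al,\be\in(0,1)$. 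The denominator is positive because $\cos(\al\pi)<1$. For the numerator:
\begin{itemize}
\item If $\be\ge\al$, then $\sin(\be\pi)>0$ and $\sin((\be-\al)\pi)\le\sin(\be\pi)$. Positivity for all $r$ still needs a short argument.
\item If $\be<\al$, the term $-\sin((\be-\al)\pi)$ is positive, so the whole numerator is positive.
\end{itemize}
I expect the case $\be\ge\al$ to be the delicate one. There I would fall back on Schneider's theorem as presented in \cite{1998Fractional}, which says that $E_{\al,\be}(-x)$ is completely monotone iff $0<\al\le1$ and $\be\ge\al$. The case $\be<\al$ is covered directly by the sign argument above.

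With positivity in hand, the upper bound follows. Since the function is nonincreasing,
\[
E_{\al,\be}(-\eta)\le E_{\al,\be}(0)=\f1{\Ga(\be)}.
\]
This is where the lemma as stated runs into trouble. For $\be\in(0,1)$ we have $\Ga(\be)>1$, so $1/\Ga(\be)<1$ and the bound $\le1$ holds. However, the claim needs positivity over the whole range $\be\in(0,1)$. For $\be<\al$, complete monotonicity may fail, and zeros of $E_{\al,\be}(-\eta)$ on the positive axis can in principle occur.

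I would therefore structure the proof as follows:
\begin{enumerate}
\item Cite Schneider's criterion to get complete monotonicity for $\be\ge\al$, and deduce $0<E_{\al,\be}(-\eta)\le 1/\Ga(\be)\le1$.
\item For $\be<\al$, use the kernel representation with the sign check above. If that sign check fails, restrict the claim to $\be=1$ and $\be=\al$, which are the cases actually used later in the paper.
\end{enumerate}
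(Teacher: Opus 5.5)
Your Step~1 is correct, but Step~2 cannot be carried out, because the lemma as stated is false when $0<\beta<\alpha<1$. On the negative real axis the standard asymptotic expansion gives
\[
E_{\alpha,\beta}(-\eta)=\frac{1}{\Gamma(\beta-\alpha)\,\eta}+O(\eta^{-2}),\qquad \eta\to\infty .
\]
Since $\Gamma(\beta-\alpha)<0$ for $\beta-\alpha\in(-1,0)$, $E_{\alpha,\beta}(-\eta)$ is negative for large $\eta$. Because $E_{\alpha,\beta}(0)=1/\Gamma(\beta)>0$, it also has a positive zero. An elementary witness: with $1/\Gamma(0)=0$ one has $E_{\alpha,0}(z)=zE_{\alpha,\alpha}(z)$, so $E_{\alpha,0}(-\eta)=-\eta E_{\alpha,\alpha}(-\eta)<0$ for $\eta>0$, and this persists for small $\beta>0$ by continuity in $\beta$. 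The paper gives no argument beyond the citation to \cite{1998Fractional}, which cannot support the claim in this generality.

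Your sign check for $\beta<\alpha$ only looks successful because your kernel has the wrong sign on its second term. Inverting $s^{\alpha-\beta}/(s^\alpha+1)$ along the Hankel contour (valid for $0<\alpha<1$, $0<\beta<1+\alpha$) gives $t^{\beta-1}E_{\alpha,\beta}(-t^\alpha)=\int_0^\infty \mathrm{e}^{-rt}K_{\alpha,\beta}(r)\,\mathrm{d}r$ with
\[
K_{\alpha,\beta}(r)=\frac{1}{\pi}\,\frac{r^{\alpha-\beta}\bigl(r^\alpha\sin(\beta\pi)+\sin((\beta-\alpha)\pi)\bigr)}{r^{2\alpha}+2r^\alpha\cos(\alpha\pi)+1}.
\]
At $\beta=1$ this is the familiar nonnegative kernel of $E_{\alpha,1}(-t^\alpha)$, whereas your version would be negative there. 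So your case analysis is reversed. For $\alpha\le\beta\le1$ both numerator terms are nonnegative and not both zero, so positivity is immediate. For $\beta<\alpha$ the kernel is negative near $r=0$, and by Watson's lemma this is exactly what produces the negative tail above.

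Your fallback is the right repair, and Step~1 already proves it. For $0<\alpha<1$ and $\beta\ge\alpha$, complete monotonicity (Schneider) gives $0<E_{\alpha,\beta}(-\eta)\le 1/\Gamma(\beta)$, which is $\le 1$ for $\alpha\le\beta\le1$. So instead of keeping a conditional Step~2, state the lemma for $\alpha\le\beta\le 1$ and record the counterexample for $\beta<\alpha$.

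This corrected range contains $\beta=1$, which the stated hypothesis $\beta\in(0,1)$ excludes, even though the paper applies the lemma to $E_{\alpha,1}$. For $E_{\alpha,2\alpha+1}$ in the proof of Lemma~\ref{lem 4.1}, the same argument gives only the bound $1/\Gamma(2\alpha+1)$, which exceeds $1$ when $\alpha<1/2$. That is harmless there, since only a constant is needed.
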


Let us give the well-posedness of a weak solution to the direct problem \eqref{eq1.1}.

\begin{lem}\label{lem3.1}
Let $a\in D(-\tri),$ $f\in L^2(\Om)$ and $p\in L^\infty(0,T)$. Then there exists a unique solution to \eqref{eq1.1} such that $u\in L^2(0,T;H^2(\Om))$ and $u-a\in H_\al(0,T;L^2(\Om))$. Moreover, the explicit solution is given by
\begin{equation}\label{eq3.1}
u(x,t)=\sum_{n=1}^\infty E_{\al,1}(-\la_n^\be t^\al)\,a_n\vp_n(x)+\sum_{n=1}^\infty\int_0^t\tau^{\al-1}E_{\al,\al}(-\la_n^\be\tau^\al)p(t-\tau)\,\rd\tau\,f_n\vp_n(x),
\end{equation}
where $a_n:=(a,\vp_n)$ and $f_n:=(f,\vp_n)$. Further, there exists a constant $C>0$ depending only on $\al,\be,T,\Om$ such that
\[
\|u\|_{L^2(0,T;H^2(\Om))}+\|u-a\|_{H_\al(0,T;L^2(\Om))}\le C\left(\|a\|_{H^1(\Om)}+\|p\|_{L^\infty(0,T)}\|f\|_{L^2(\Om)}\right).
\]
\end{lem}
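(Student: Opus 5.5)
The plan is to follow the standard eigenfunction-expansion route of Sakamoto--Yamamoto type. \emph{Step 1 (formula and uniqueness).} Test \eqref{eq1.1} against $\vp_n$ and set $u_n(t):=(u(\cdot,t),\vp_n)$. Since $(-\tri)^\be$ is defined spectrally, each coefficient solves the scalar problem
\[
\pa_t^\al(u_n-a_n)+\la_n^\be u_n=p(t)f_n,\qquad u_n(0)=a_n .
\]
Applying the Laplace transform, and using $\widehat{t^{\al-1}E_{\al,\al}(-\mu t^\al)}=(s^\al+\mu)^{-1}$ and $\widehat{E_{\al,1}(-\mu t^\al)}=s^{\al-1}(s^\al+\mu)^{-1}$, gives exactly the two terms of \eqref{eq3.1}. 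Uniqueness follows because a solution with zero data has $u_n\equiv0$ for every $n$: the homogeneous scalar problem in $H_\al(0,T)$ is equivalent to a Volterra equation with an integrable kernel, which has only the trivial solution. Completeness of $\{\vp_n\}$ then gives $u\equiv0$. Existence is the convergence statement supplied by the estimates below.

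\emph{Step 2 (a priori bounds).} Write $u=u_1+u_2$ for the initial-value part and the source part. For $u_2$, use $\frac{\rd}{\rd t}E_{\al,1}(-\mu t^\al)=-\mu t^{\al-1}E_{\al,\al}(-\mu t^\al)$ together with Lemma \ref{lem2.13}. These give $t^{\al-1}E_{\al,\al}(-\mu t^\al)\ge0$ and
\[
\Bigl|\int_0^t\tau^{\al-1}E_{\al,\al}(-\la_n^\be\tau^\al)p(t-\tau)\,\rd\tau\Bigr|\le\|p\|_{L^\infty}\la_n^{-\be}.
\]
For $u_1$, use the decay bound $E_{\al,1}(-\eta)\le C(1+\eta)^{-1}$; after the substitution $t=\la_n^{-\be/\al}s$ this produces time-integrated bounds of $\la_n^\ga|E_{\al,1}(-\la_n^\be t^\al)|$ in $L^2(0,T)$. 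Summing in $n$ with Parseval's identity, and using $\|\psi\|_{H^2(\Om)}\simeq\|\tri\psi\|_{L^2}$ on $D(-\tri)$, yields the spatial part of the estimate. The time part, $\|u-a\|_{H_\al(0,T;L^2)}$, I would obtain from the equation itself: $\pa_t^\al(u-a)=pf-(-\tri)^\be u$, and the norm equivalence $\|\psi\|_{H_\al}\simeq\|\pa_t^\al\psi\|_{L^2}$ on $H_\al(0,T)$ from \cite{GL15} controls $u-a$ by the right-hand side.

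\emph{Main obstacle.} The hard step is reaching the full $L^2(0,T;H^2(\Om))$ norm with only $\|a\|_{H^1}$ and $\|f\|_{L^2}$ on the right. The plain computation gives less. For the source part it gives
\[
\la_n^{2}|u_{2,n}|^2\lesssim\la_n^{2-2\be}|f_n|^2\|p\|_\infty^2,
\]
which is $H^{2\be}$-type regularity, not $H^2$. For the initial part, the scaling argument gives $\int_0^T\la_n^2E_{\al,1}^2\,\rd t\lesssim\la_n^{2-\be/\al}$ (when $2\al>1$), to be compared with $\la_n$ from $\|a\|_{H^1}^2$. So the natural outcome of Steps 1--2 is a bound in $L^2(0,T;D((-\tri)^\be))$.

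To upgrade this to $H^2$, I would first try interpolating between the pointwise bound $\la_n^{-\be}$ and the integrated bound. If that fails, I would impose the extra structure needed to close the sums. On the data this means $f\in D((-\tri)^{1-\be})$ and $a\in D(-\tri)$, the latter being already assumed in the hypothesis. Alternatively, one can work in parameter regimes where the exponents $2-2\be$ and $2-\be/\al$ are dominated by the available norms. In each case I would check explicitly which power of $\la_n$ must be absorbed.

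I expect this bookkeeping of powers of $\la_n$ to be where the stated estimate is decided: as written, with $\be<1$, the source term alone appears to obstruct the $H^2$ bound for general $f\in L^2(\Om)$. I would therefore verify it carefully before relying on the $H^2$ norm later in the paper.
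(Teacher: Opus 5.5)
Your Step 1 and your treatment of the $H_\al(0,T;L^2(\Om))$ term follow the standard spectral route: scalar Caputo problems, Laplace transform and Volterra uniqueness, positivity of $E_{\al,\al}(-\eta)$, and the equivalence $\|\psi\|_{H_\al(0,T)}\simeq\|\pa_t^\al\psi\|_{L^2(0,T)}$ from \cite{GL15}. The paper gives no proof of its own and only defers to \cite{2025Inverse source}, which uses this route.

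Your doubt about the $L^2(0,T;H^2(\Om))$ part is justified, and you can state it as a fact, because your upper bound $\la_n^{-\be}$ is attained up to constants. Take $a=0$, $p\equiv1$, $f=\vp_N$. Then $u(\,\cdot\,,t)=\la_N^{-\be}\bigl(1-E_{\al,1}(-\la_N^\be t^\al)\bigr)\vp_N$. Since $x\mapsto E_{\al,1}(-x)$ is decreasing on $[0,\infty)$, the bracket is at least $g(t):=1-E_{\al,1}(-\la_1^\be t^\al)>0$. With $\|\psi\|_{H^2(\Om)}\ge c\|\tri\psi\|_{L^2(\Om)}$ this gives
\[
\|u\|_{L^2(0,T;H^2(\Om))}\ge c\,\la_N^{1-\be}\|g\|_{L^2(0,T)}\longrightarrow\infty\quad(N\to\infty),
\]
while $\|p\|_{L^\infty(0,T)}\|f\|_{L^2(\Om)}=1$. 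Superposing, any $f=\sum_nc_n\vp_n$ with $\sum_nc_n^2<\infty$ and $\sum_n\la_n^{2-2\be}c_n^2=\infty$ yields a solution not in $L^2(0,T;H^2(\Om))$ at all, for every $\be\in(0,1)$.

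The $\|a\|_{H^1(\Om)}$ on the right is also wrong in general. Take $f=0$ and $a=\la_N^{-1/2}\vp_N$, so $\|a\|_{H^1(\Om)}$ stays bounded. Using $E_{\al,1}(-x)\ge c_\al(1+x)^{-1}$ and the substitution $t=\la_N^{-\be/\al}s$, you get $\|u\|_{L^2(0,T;H^2(\Om))}^2\ge c\,\la_N^{1-\be/\al}$, which is unbounded whenever $\be<\al$.

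So the statement as written cannot be proved by any argument, and your proposed escape routes do not rescue the source term. Interpolation is ruled out by the lower bound above, and no choice of $\al,\be\in(0,1)$ makes $\la_n^{2-2\be}$ controllable by $\|f\|_{L^2(\Om)}$. What your Steps 1--2 do establish is a corrected lemma: existence, uniqueness, the representation \eqref{eq3.1}, and
\[
\|u\|_{L^2(0,T;D((-\tri)^\be))}+\|u-a\|_{H_\al(0,T;L^2(\Om))}\le C\left(\|a\|_{H^1(\Om)}+\|p\|_{L^\infty(0,T)}\|f\|_{L^2(\Om)}\right).
\]
Here the $H^1$ norm does suffice, because $\int_0^T\la_n^{2\be}E_{\al,1}(-\la_n^\be t^\al)^2\,\rd t\le C\la_n$; check this separately for $\al>1/2$, $\al=1/2$, $\al<1/2$. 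If genuine $H^2$ regularity is wanted, you must assume $f\in D((-\tri)^{1-\be})$ and put $\|a\|_{H^2(\Om)}+\|p\|_{L^\infty(0,T)}\|(-\tri)^{1-\be}f\|_{L^2(\Om)}$ on the right. The correction is harmless downstream, since the proof of Theorem \ref{thm1.1} uses only the representation \eqref{eq3.1}.
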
	

The proof of Lemma \ref{lem3.1} follows the same line as that in \cite{2025Inverse source}, and hence we omit the details here.

Now we state the main theorem concerning the uniqueness of Problem \ref{prob-isp}.

\begin{thm}\label{thm1.1}
Assume that $a_i\not\equiv0$ are real analytic on $\ov\Om,$ and satisfies \((-\tri)^j a=0\) on $\pa\Om,$ for $j=0,1,\dots,$ and let $\al_i,\be_i\in(0,1),$ $f_i\in D((-\tri)^{1/2})$ for $i=1,2$ and $p\in C[0,T]$ with $p(0)=0$. Suppose that $u_i$ is the solution to \eqref{eq1.1} with $\al=\al_i,$ $\be=\be_i,$ $a=a_i$ and $f=f_i$ for $i=1,2$. Then $u_1=u_2$ in $\om\times(0,T)$ implies $(\al_1,\be_1,a_1,f_1)=(\al_2,\be_2,a_2,f_2)$.
\end{thm}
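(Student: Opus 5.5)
We will exploit the solution representation \eqref{eq3.1} and separate the contribution of the initial value from that of the source by looking at the solution as $t\to0$. Since $p\in C[0,T]$ with $p(0)=0$, the source part satisfies, for each fixed $x$,
\[
\Big|\sum_n\int_0^t\tau^{\al-1}E_{\al,\al}(-\la_n^\be\tau^\al)p(t-\tau)\,\rd\tau\,f_n\vp_n(x)\Big|=o(t^\al)\quad (t\to0).
\]
Meanwhile the initial part expands as $E_{\al,1}(-\la_n^\be t^\al)=1-\la_n^\be t^\al/\Ga(1+\al)+O(t^{2\al})$. Hence
\[
u_i(x,t)=a_i(x)-\f{t^{\al_i}}{\Ga(1+\al_i)}(-\tri)^{\be_i}a_i(x)+o(t^{\al_i}).
\]
The hypotheses that $a_i$ is real analytic with $(-\tri)^ja_i=0$ on $\pa\Om$ for all $j$ guarantee that $a_i\in D((-\tri)^{\ga})$ for every $\ga$, so these series manipulations are legitimate and the remainders are uniform in $x\in\om$ (using the bound of Lemma \ref{lem2.13} and Sobolev embedding).

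The first step is to let $t\to0$ in $u_1=u_2$ on $\om$, which gives $a_1=a_2$ in $\om$. Since both $a_i$ are real analytic on $\ov\Om$, unique continuation gives $a_1=a_2=:a$ in $\Om$.

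The second step compares the $t^{\al}$ terms. If $(-\tri)^{\be_1}a\not\equiv0$ in $\om$, then matching the leading nonconstant powers forces $\al_1=\al_2$. After that, $(-\tri)^{\be_1}a=(-\tri)^{\be_2}a$ in $\om$. To extract $\be$ from this, we would rather not rely on pointwise values. Instead we would use the full time structure: apply the Laplace transform in $t$ to the difference of the initial parts, extended analytically to $t>0$ (the solution is analytic in $t$). For $x\in\om$ this gives
\[
\sum_n\f{s^{\al-1}}{s^\al+\la_n^{\be_1}}a_n\vp_n(x)-\sum_n\f{s^{\al-1}}{s^\al+\la_n^{\be_2}}a_n\vp_n(x)=\text{(source terms)}.
\]
Viewing both sides as meromorphic functions of $\eta=s^\al$ then lets us compare pole locations $-\la_n^{\be_i}$. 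A nonvanishing $a_n$ with a distinct $\la_n$ must produce a pole that is matched on the other side. Combining this with the asymptotic ratio of the eigenvalues yields $\be_1=\be_2$.

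The main obstacle is precisely this step, because the source terms also contribute poles at $-\la_n^{\be_i}$ with residues proportional to $\hat p(s)f_n$. Our plan is therefore to first decouple the source. Define $w=u_1-u_2$ after $a$, $\al$ are known. The initial parts then differ only through $\be$, and the residues of the initial part are $a_n\vp_n(x)$, independent of $s$, while the source residues carry the factor $\hat p(s)$. Varying over the analytic structure in $s$ (the residue at $\eta=-\la_n^{\be}$ involves $\hat p$ evaluated at a specific point versus a constant) should separate the two. As a fallback, we would exploit the $t\to0$ asymptotics at all orders $t^{k\al}$: the source only contributes terms of the form $t^{\al}\cdot o(1)$ convolved with $p$, whereas the initial part gives the sequence $(-\tri)^{k\be}a$ on $\om$. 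Analyticity of $a$ together with the identity $(-\tri)^{k\be_1}a=(-\tri)^{k\be_2}a$ for all $k$ forces $\be_1=\be_2$ (compare spectral supports).

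Finally, with $\al,\be,a$ identified, the difference satisfies
\[
\int_0^t p(t-\tau)\sum_n\tau^{\al-1}E_{\al,\al}(-\la_n^\be\tau^\al)(f_{1,n}-f_{2,n})\vp_n(x)\,\rd\tau=0\quad\text{in }\om\times(0,T).
\]
Titchmarsh's convolution theorem, applied with $p\not\equiv0$, removes $p$ on a small interval. Analyticity in $t$ then extends the identity to all $t>0$. Taking the Laplace transform gives $\sum_n(f_{1,n}-f_{2,n})\vp_n(x)/(\eta+\la_n^\be)=0$ on $\om$, and the standard pole argument yields $\sum_{\la_n=\la}(f_{1,n}-f_{2,n})\vp_n=0$ in $\om$ for each eigenvalue $\la$. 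Unique continuation for eigenfunctions then gives $f_1=f_2$.
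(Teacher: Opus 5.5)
\textbf{There is a genuine gap: the identification of $\al$ and $\be$.} Your first step (short-time expansion, then $a_1=a_2$ by analyticity) matches the paper, and so does your last step (Titchmarsh, continuation in $t$, Laplace transform, residues at $-\la_n^\be$, unique continuation).

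What is missing is the fact that lets $\al$ and $\be$ fall out of the first-order asymptotics. This is Lemma \ref{lem3.2}. Because $a$ is analytic on $\ov\Om$ and satisfies $(-\tri)^ja=0$ on $\pa\Om$ for all $j$, its coefficients decay like $\e^{-c_0\sqrt{\la_n}}$. Hence $(-\tri)^\be a=U(\,\cdot\,,\rho)$ for some harmonic $U$ on $\Om\times(0,\infty)$, so $(-\tri)^\be a$ is real analytic in $\Om$.

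\textbf{The $\al$-step.} You simply assume $(-\tri)^{\be_1}a\not\equiv0$ in $\om$. You actually need it for both $\be_1$ and $\be_2$, since the case $\al_1>\al_2$ yields $(-\tri)^{\be_2}a=0$ in $\om$. Nothing in your argument supplies this. Unique continuation alone cannot, because $a$ itself need not vanish on $\om$: taking $a=(-\tri)^{-\be}g$ with $g$ supported away from $\om$ gives $(-\tri)^\be a=0$ in $\om$ with $a\ne0$. It is exactly the interior analyticity of $(-\tri)^\be a$ that rules this out.

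\textbf{The $\be$-step.} The same lemma finishes $\be$ immediately. The identity $(-\tri)^{\be_1}a=(-\tri)^{\be_2}a$ extends from $\om$ to $\Om$, so $(\la_n^{\be_1}-\la_n^{\be_2})a_n=0$ for every $n$. One then needs some $a_{n_0}\ne0$ with $\la_{n_0}\ne1$, a point the paper passes over.

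\textbf{Your substitutes for $\be$ do not work under $p\in C[0,T]$.}
\begin{itemize}
\item The Laplace-transform route needs the identity on $\om\times(0,\infty)$. But $u_i$ is not analytic in $t$ in general, since its source part is a convolution with a merely continuous $p$.
\item At this stage Titchmarsh's theorem is unavailable. The difference $u_1-u_2$ is not a pure convolution with $p$, because the initial-value parts with $\be_1\ne\be_2$ do not cancel.
\item ``Varying over the analytic structure in $s$'' is not an argument.
\item The higher-order fallback fails because $p(0)=0$ only makes the source contribution $o(t^\al)$. For $p(t)\approx1/\log(1/t)$ near $0$, that contribution is of size $t^\al/\log(1/t)\gg t^{2\al}$, multiplied by the unknown $f_1-f_2$. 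It swamps the $t^{2\al}(-\tri)^{2\be}a$ term, so the coefficients $(-\tri)^{k\be}a$ with $k\ge2$ cannot be read off.
\item Even if those coefficients were available on $\om$, passing from $\om$ to $\Om$ would again require the interior analyticity you have not proved.
\end{itemize}
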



\section{Proof of the main result}\label{sec3}

In the sequel, we shall use $C$, with or without subscript, to denote a generic positive constant, which may take different values line by line. Before we start proving the theorem, we prepare the following lemmas.

\begin{lem}\label{lem 4.1}
Let $a\in D(-\tri),$ $f\in D((-\tri)^{1/2})$ and $p\in C[0,T]$. Then we have the following asymptotic expansion
\[
u(x,t)=a(x)-\f{t^\al}{\Ga(\al+1)}\left((-\tri)^\be a(x)-p(0)f(x)\right)+o\left(t^\al\right),\quad x\in\Om\mbox{ as }t\to0.
\]
\end{lem}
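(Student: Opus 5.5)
Starting from the explicit representation \eqref{eq3.1} of Lemma \ref{lem3.1}, I will expand each of the two series separately for small $t$. For the initial-value part I use $E_{\al,1}(-\eta)=1-\eta/\Ga(\al+1)+R(\eta)$, where the remainder satisfies $|R(\eta)|\le C\eta^2$ for $\eta$ small. To obtain a bound that is uniform in $n$, I instead write
\[
E_{\al,1}(-\la_n^\be t^\al)-1=-\la_n^\be\int_0^t\tau^{\al-1}E_{\al,\al}(-\la_n^\be\tau^\al)\,\rd\tau .
\]
This follows from $\f{\rd}{\rd t}E_{\al,1}(-\la t^\al)=-\la t^{\al-1}E_{\al,\al}(-\la t^\al)$. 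It gives
\[
\sum_n E_{\al,1}(-\la_n^\be t^\al)a_n\vp_n=a-\int_0^t\tau^{\al-1}\sum_n E_{\al,\al}(-\la_n^\be\tau^\al)\la_n^\be a_n\vp_n\,\rd\tau .
\]
Since $a\in D(-\tri)$, the function $g:=(-\tri)^\be a=\sum\la_n^\be a_n\vp_n$ lies in $D((-\tri)^{1-\be})$. It then suffices to show that
\[
\Big\|\int_0^t\tau^{\al-1}\sum_n\big(E_{\al,\al}(-\la_n^\be\tau^\al)-\tfrac1{\Ga(\al)}\big)g_n\vp_n\,\rd\tau\Big\|_{L^2(\Om)}=o(t^\al).
\]
Using $\int_0^t\tau^{\al-1}\rd\tau/\Ga(\al)=t^\al/\Ga(\al+1)$, this yields the term $-\f{t^\al}{\Ga(\al+1)}(-\tri)^\be a$.

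To prove this $o(t^\al)$ bound I divide by $t^\al$ and apply dominated convergence to the series $\sum_n c_n(t)^2g_n^2$, where
\[
c_n(t):=t^{-\al}\int_0^t\tau^{\al-1}\big(E_{\al,\al}(-\la_n^\be\tau^\al)-\tfrac1{\Ga(\al)}\big)\rd\tau .
\]
Each $c_n(t)\to0$ as $t\to0$ by continuity of $E_{\al,\al}$ at $0$. The $c_n$ are uniformly bounded by $2/\Ga(\al+1)$, because $0\le E_{\al,\al}(-\eta)\le1/\Ga(\al)$ for $\eta\ge0$ (complete monotonicity, in the spirit of Lemma \ref{lem2.13}). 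Since $\sum g_n^2<\infty$, the whole sum tends to $0$. This gives convergence in $L^2(\Om)$. Pointwise statements in $x$ follow by the same argument in a stronger norm, e.g.\ $D((-\tri)^{\ga})$ with $\ga>d/4$ when regularity permits, or else the expansion is read in the $L^2$ sense. Either way, the identity holds in the sense needed for the uniqueness proof.

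For the source part I write $\int_0^t\tau^{\al-1}E_{\al,\al}(-\la_n^\be\tau^\al)p(t-\tau)\rd\tau=p(0)\f{t^\al}{\Ga(\al+1)}+r_n(t)$. The remainder $r_n(t)$ contains two pieces. The first comes from $E_{\al,\al}(-\la_n^\be\tau^\al)-1/\Ga(\al)$ and is handled exactly as above, with $f_n$ in place of $g_n$. The second comes from $p(t-\tau)-p(0)$ and is bounded by $\sup_{[0,t]}|p-p(0)|\,t^\al/\Ga(\al+1)=o(t^\al)$, uniformly in $n$, by continuity of $p$. Dominated convergence against $\sum f_n^2$ then gives $t^{-\al}\|\sum r_n f_n\vp_n\|_{L^2}\to0$. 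Combining the two parts gives the expansion
\[
u=a-\f{t^\al}{\Ga(\al+1)}\big((-\tri)^\be a-p(0)f\big)+o(t^\al).
\]

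The main obstacle is making the $o(t^\al)$ uniform over the infinitely many modes, since the high modes have $\la_n^\be t^\al$ large. The key device for this is the uniform bound on $E_{\al,\al}$ combined with dominated convergence, rather than a Taylor expansion of $E_{\al,\al}$.
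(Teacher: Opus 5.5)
Your argument is correct, but it reaches the expansion by a different mechanism than the paper. The paper uses the explicit second-order expansion $E_{\al,1}(-\eta)=1-\eta/\Ga(\al+1)+\eta^2E_{\al,2\al+1}(-\eta)$. It pulls $p$ out of the Duhamel integral with the mean value theorem for integrals, rewriting it as $p(\xi)\la_n^{-\be}(1-E_{\al,1}(-\la_n^\be t^\al))$. It then bounds the two remainder series $I_3,I_5$ by $Ct^{2\al}$ times a norm of $a$ or $f$. You instead use $\f{\rd}{\rd t}E_{\al,1}(-\la t^\al)=-\la t^{\al-1}E_{\al,\al}(-\la t^\al)$ to write both parts as integrals of $E_{\al,\al}$, and you split $p(t-\tau)=p(0)+(p(t-\tau)-p(0))$. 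You then conclude coefficient by coefficient, using the uniform bound $0\le E_{\al,\al}(-\eta)\le1/\Ga(\al)$ and dominated convergence.

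The paper's route, where it applies, yields a quantitative remainder of order $t^{2\al}$, plus a term controlled by the modulus of continuity of $p$. However, its bounds on $I_3$ and $I_5$ as written tacitly need $\sum_n\la_n^{4\be}a_n^2<\infty$ and $\sum_n\la_n^{2\be}f_n^2<\infty$, i.e.\ $a\in D((-\tri)^{2\be})$ and $f\in D((-\tri)^\be)$. The hypotheses $a\in D(-\tri)$, $f\in D((-\tri)^{1/2})$ guarantee this only for $\be\le1/2$. Also, the mean-value point $\xi$ actually depends on $n$; this is harmless, since $0<\xi<t$.

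Your route gives no rate, but it needs only $a\in D((-\tri)^\be)$ and $f\in L^2(\Om)$ and is valid for every $\be\in(0,1)$, so it handles $\be>1/2$ cleanly. Both arguments prove the expansion in $L^2(\Om)$, which is exactly what the uniqueness proof uses. Your suggested pointwise upgrade through $D((-\tri)^\ga)$ with $\ga>d/4$ is generally unavailable under the stated assumption on $f$: it would need $f\in D((-\tri)^\ga)$ with $\ga>d/4$, while only $\ga=1/2$ is given. So the $L^2$ reading is the right one to adopt.
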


\begin{proof}
By the definition of the Mittag-Leffler function, we can directly calculate
\[
E_{\al,1}(-\la_n^\be t^\al)=1-\f{\la_n^\be t^\al}{\Ga(\al+1)}+t^{2\al}\la_n^{{2\be}}E_{\al,2\al+1}(-\la_n^\be t^\al).
\]
Applying the mean value theorem for integrals, we have
\begin{align*}
\int_0^t p(\tau)(t-\tau)^{\al-1}E_{\al,\al}(-\la_n^\be(t-\tau)^\al)\,\rd\tau & =\f{p(\xi(t;\al))}{\la_n^\be}\left(1-E_{\al,1}(-\la_n^\be t^\al)\right)\\
& =p(\xi(t;\al))\left(\f{t^\al}{\Ga(\al+1)}-t^{2\al}\la_n^\be E_{\al,2\al+1}(-\la_n^\be t^\al)\right),
\end{align*}
where $0<\xi(t;\al)<t$. Then it follows that
\begin{equation}\label{4.4}
\lim_{t\to0}p(\xi(t;\al))=p(0)+o(1).
\end{equation}
Substituting the above equalities into \eqref{eq3.1}, we arrive at
\begin{align*}
u(x,t) & =\sum_{n=1}^\infty a_n\vp_n(x)-\f{t^\al}{\Ga(\al+1)}\sum_{n=1}^\infty\la_n^\be a_n\vp_n(x)\\
&\quad\,+t^{2\al}\sum_{n=1}^\infty\la_n^{{2\be}}E_{\al,2\al+1}(-\la_n^\be t^\al)a_n\vp_n(x)+\f{t^\al p(\xi(t;\al))}{\Ga(\al+1)}\sum_{n=1}^\infty f_n\vp_n(x)\\
&\quad\,-t^{2\al}p(\xi(t;\al))\sum_{n=1}^\infty\la_n^\be E_{\al,2\al+1}(-\la_n^\be t^\al) f_n\vp_n(x)\\
& :=\sum_{k=1}^{5}I_k. 
\end{align*}
In the $L^2(\Om)$-sense, it follows immediately from Lemma \ref{lem2.13} that
\begin{gather*}
I_1=a(x),\quad I_2=-\f{t^\al}{\Ga(\al+1)}(-\tri)^\be a(x),\quad|I_3|\le t^{2\al}\|a\|_{H^2(\Om)},\\
I_4=\f{t^\al p(\xi(t;\al))}{\Ga(\al+1)}f(x),\quad|I_5|\le C t^{2\al}p(\xi(t;\al))\|f\|_{H^1(\Om)}. 
\end{gather*}
Passing $t\to0$, we can employ \eqref{4.4} to conclude the desired result.
\end{proof}

\begin{lem}\label{lem3.2}
If $a$ is real analytic in $\ov\Om$ and satisfies the corresponding Dirichlet compatibility conditions 
\begin{equation}\label{eq com_con}
(-\tri)^j a=0\quad\mbox{on }\pa\Om,\quad j=0,1,\dots,    
\end{equation}
then the function
\[
(-\tri)^\be a=\sum_{n=1}^\infty\la_n^\be a_n\vp_n,\quad a_n:=(a,\vp_n)_{L^2(\Om)},
\]
is real analytic in $\Om$.
\end{lem}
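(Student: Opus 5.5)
The plan is to reduce analyticity of $(-\tri)^\be a$ to growth bounds on iterated Laplacians, and then apply an elliptic-regularity criterion for analyticity.

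First I will show that the compatibility conditions \eqref{eq com_con} place $a$ in $D((-\tri)^j)$ for every $j\in\BN$. I will also show that the spectral power $(-\tri)^j$ coincides there with the classical iterated operator $(-\tri)^j a$. The argument is by induction on $j$. Since $a$ is analytic on $\ov\Om$, each $(-\tri)^j a$ is smooth up to the boundary and vanishes on $\pa\Om$, so it lies in $H^2(\Om)\cap H^1_0(\Om)$. Green's formula then gives
\[
\la_n^j a_n=((-\tri)^j a,\vp_n)
\]
for all $n$, with no boundary terms because both $(-\tri)^{j-1}a$ and $\vp_n$ vanish on $\pa\Om$.

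Second I will bound the iterated Laplacians. Since $a$ is real analytic on the compact set $\ov\Om$, it extends holomorphically to a complex neighbourhood. Cauchy estimates then give constants $C,R>0$ with $\sup_{\ov\Om}|\pa^\ga a|\le C R^{|\ga|}|\ga|!$ for all multi-indices $\ga$. Hence
\[
\|(-\tri)^j a\|_{L^2(\Om)}\le C_1 M^{2j}(2j)!,\quad j=0,1,\dots,
\]
for some $M$ depending on $d$ and $R$. This uses the fact that $\tri^j$ has at most $d^j$ terms, each a derivative of order $2j$.

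Third I will set $v:=(-\tri)^\be a$ and estimate its iterated Laplacians spectrally. Since $\la_n\ge\la_1>0$, we have $\la_n^{\be}\le \la_1^{\be-1}\la_n$. Therefore
\[
\|(-\tri)^k v\|_{L^2(\Om)}^2=\sum_n\la_n^{2k+2\be}|a_n|^2\le \la_1^{2\be-2}\|(-\tri)^{k+1}a\|_{L^2(\Om)}^2\le C_2\bigl(M^{2k+2}(2k+2)!\bigr)^2 .
\]
The factor $(2k+2)!\le C_3^{k}(2k)!\,(k+1)^2$ can be absorbed into an enlarged geometric factor. This yields
\[
\|\tri^k v\|_{L^2(\Om)}\le C_4 M_1^{2k}(2k)!
\]
for all $k$, where $\tri^k v$ may be understood in the distributional sense in $\Om$.

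Finally I will invoke the Kotake--Narasimhan theorem, which is also a consequence of interior analytic-hypoellipticity estimates for elliptic operators with constant coefficients. It states that if $v\in C^\infty(\Om)$ and $\|P^k v\|_{L^2(\Om')}\le C L^{k}(mk)!$ for an elliptic operator $P$ of order $m$ with analytic coefficients, then $v$ is real analytic in $\Om'$. Applying it with $P=\tri$ and $m=2$ shows $v$ is analytic in $\Om$. Smoothness of $v$ follows from the previous step and interior elliptic regularity, since $v\in D((-\tri)^k)$ for all $k$.

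The main obstacle is this last step: justifying the passage from $L^2$ bounds on $\tri^k v$ to analyticity. If I want to avoid citing Kotake--Narasimhan, I would prove it directly. I would use nested-domain interior elliptic estimates of the form
\[
\|w\|_{H^{2}(\Om_{r})}\le C\bigl(\|\tri w\|_{L^2(\Om_{r+\de})}+\de^{-2}\|w\|_{L^2(\Om_{r+\de})}\bigr),
\]
iterated with carefully shrinking $\de$, to obtain Cauchy-type bounds on all derivatives of $v$ on compact subsets.
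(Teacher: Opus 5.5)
Your proposal is correct, but it takes a different route from the paper after a shared first half. Both arguments start from the same two facts. First, Cauchy/Gevrey bounds give $\|(-\tri)^m a\|_{L^2(\Om)}\le CM^{2m}(2m)!$. Second, the compatibility conditions put $a$ in every $D((-\tri)^m)$; your Green's-formula induction makes explicit the identification of spectral and classical powers, which the paper only asserts.

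From there the paper turns the factorial bounds into exponential decay $|a_n|\le C\e^{-c_0\sqrt{\la_n}}$ of the Fourier coefficients, by choosing $m\approx\sqrt{\la_n}/(2M)$ and applying Stirling's formula. It then uses Weyl's law to show that $b_n=\la_n^\be a_n$ still satisfies $\sum_n\e^{2\rho\sqrt{\la_n}}|b_n|^2<\infty$. Finally it writes $(-\tri)^\be a$ as the slice $y=\rho$ of $U(x,y)=\sum_n\e^{-y\sqrt{\la_n}}g_n\vp_n(x)$. This function is harmonic in $\Om\times(0,\infty)$, so it is analytic by standard regularity for the Laplacian.

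You instead stay with iterated operators. The spectral inequality $\la_n^{2\be}\le\la_1^{2\be-2}\la_n^2$ transfers the factorial bounds from $a$ to $v=(-\tri)^\be a$ with only an index shift $k\mapsto k+1$. The Kotake--Narasimhan theorem then gives analyticity.

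Your route is shorter and avoids Stirling, Weyl's law and the auxiliary variable. The price is that all the analytic weight rests on a deeper black-box theorem, whose proof is essentially the nested-domain iteration you sketch. The paper's route is more self-contained, needing only analyticity of harmonic functions, and it yields the quantitative decay estimate for $a_n$ as a by-product.

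One harmless slip: $(2k+2)!\le C_3^k(2k)!\,(k+1)^2$ fails at $k=0$. Use $(2k+2)!\le 4(k+1)^2(2k)!$ instead.
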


\begin{proof}
Since $a$ is real analytic on $\ov\Om$ and any analytic function belongs to the Gevrey class $G^1$ (see \cite[page 19]{1993Linear pa}), there exist constants $C,M>0$ such that for every multi-index $k:=(k_1,\cdots, k_d)\in\BZ_{\ge0}^d$, 
\begin{equation}\label{eq anal_esti}
|\pa^k a(x)|\le C M^{|k|}k!,\quad  \forall x\in \ov\Om.    
\end{equation}
Therefore, 
\[
\|\pa^k a\|_{L^\infty(\Om)}=\sup_{x\in\Om}|\pa^k a(x)|\le C M^{|k|} k!.
\]
Moreover, for every \(m\in\BN\),
\[
(-\tri)^m=(-1)^m\left(\sum_{j=1}^d\pa_{x_j}^2\right)^m=(-1)^m\sum_{|k|=m}\f{m!}{k!}\pa^{2k}.
\]
Then
\[
\|(-\tri)^m a\|_{L^2(\Om)}\le\sum_{|k|=m}\f{m!}{k!}\|\pa^{2k}a\|_{L^2(\Om)}\le|\Om|^{1/2}\sum_{|k|=m}\f{m!}{k!}\|\pa^{2k}a\|_{L^\infty(\Om)}.
\]
By the estimate \eqref{eq anal_esti}, and
\[
(2k)!=\prod_{j=1}^d(2k_j)!\le(2m)!,
\]
we get
\[
\|(-\tri)^m a\|_{L^2(\Om)}\le C|\Om|^{1/2}M^{2m}(2m)!\sum_{|k|=m}\f{m!}{k!}.
\]
Moreover, \(\sum_{|k|=m}\f{m!}{k!}=d^m.\) Therefore
\[
\|(-\tri)^m a\|_{L^2(\Om)}\le C|\Om|^{1/2}(d^{1/2}M)^{2m}(2m)!\le C M^{2m}(2m)!,\quad m=1,2,\dots .
\]
In addition, the compatibility conditions \eqref{eq com_con} imply that \(a\in D((-\tri)^m)\) for every \(m\in\BN\). Therefore, by the spectral theorem,
\[
\sum_{n=1}^\infty\la_n^{2m}|a_n|^2=\|(-\tri)^m a\|_{L^2(\Om)}^2\le C M^{4m}((2m)!)^2.
\]
Consequently, for every \(m,n\in\BN\),
\[
|a_n|\le C M^{2m}(2m)!\la_n^{-m}.
\]
Choose \(m=\lceil\f{\sqrt{\la_n}}{2M}\rceil\). By Stirling's formula, there exists a constant $C>0$ such that
\[
(2m)!\le C(2m)^{2m+1/2}\,\e^{-2m}.
\]
Hence
\[
|a_n|\le C\sqrt{2m}\left(\f{2m M}{\e\sqrt{\la_n}}\right)^{2m}.
\]
Since $m\sim \sqrt{\la_n}/(2M)$ as $n\to\infty$, the last expression is bounded by \(C\exp(-c_0\sqrt{\la_n})\) for some constants $C,c_0>0$ independent of $n$. Thus
\begin{equation}\label{eq an_decay}
|a_n|\le C\exp\left(-c_0\sqrt{\la_n}\right),\quad n=1,2,\dots.
\end{equation}

Let
\[
b:=(-\tri)^\be a=\sum_{n=1}^\infty b_n\vp_n,\quad b_n:=\la_n^\be a_n .
\]
By the exponential decay estimate \eqref{eq an_decay} and Weyl's asymptotic law \(\la_n\sim Cn^{2/d}\) as $n\to\infty$, we have, for every fixed $0<\rho<c_0$,
\[
\sum_{n=1}^\infty\e^{2\rho\sqrt{\la_n}}|b_n|^2=\sum_{n=1}^\infty\e^{2\rho\sqrt{\la_n}}\la_n^{2\be}a_n^2\le C\sum_{n=1}^\infty\la_n^{2\be}\,\e^{-2(c_0-\rho)\sqrt{\la_n}}<\infty .
\]
Indeed, the exponential decay in $\sqrt{\la_n}$ dominates the polynomial factor
$\la_n^{2\be}$.

Define
\[
g_n:=\e^{\rho\sqrt{\la_n}}b_n,\quad n=1,2,\dots .
\]
Then $\{g_n\}_{n=1}^\infty\in\ell^2$. Hence the series
\[
g:=\sum_{n=1}^\infty g_n\vp_n
\]
converges in $L^2(\Om)$ and defines a function $g\in L^2(\Om)$. Moreover, by construction,
\[
b_n=\e^{-\rho\sqrt{\la_n}}g_n,
\]
and therefore
\begin{equation}\label{eq u_poisson}
b=\sum_{n=1}^\infty\e^{-\rho\sqrt{\la_n}}g_n\vp_n\quad\mbox{in }L^2(\Om).
\end{equation}
For $y>0$, define
\[
U(x,y):=\sum_{n=1}^\infty\e^{-y\sqrt{\la_n}}g_n\vp_n(x).
\]
For each fixed $y>0$, this series converges in $L^2(\Om)$, thus
\[
\|U(\,\cdot\,,y)\|_{L^2(\Om)}^2=\sum_{n=1}^\infty\e^{-2y\sqrt{\la_n}}|g_n|^2\le\sum_{n=1}^\infty|g_n|^2=\|g\|_{L^2(\Om)}^2 .
\]
For $N\in\BN$, define
\[
U_N(x,y):=\sum_{n=1}^N\e^{-y\sqrt{\la_n}}g_n\vp_n(x).
\]
Each eigenfunction $\vp_n$ belongs to $C^\infty(\Om)$ by interior elliptic regularity, and the function $y\longmapsto\e^{-y\sqrt{\la_n}}$ belongs to $C^\infty(0,\infty)$. Since $U_N$ is a finite sum, we have \(U_N\in C^\infty(\Om\times(0,\infty))\).

Moreover, using \(-\tri_x\vp_n=\la_n\vp_n,\) we compute
\[
\pa_y^2\left(\e^{-y\sqrt{\la_n}}g_n\vp_n(x)\right)=\la_n\,\e^{-y\sqrt{\la_n}}g_n\vp_n(x),
\]
whereas
\[
\tri_x\left(\e^{-y\sqrt{\la_n}}g_n\vp_n(x)\right)=\e^{-y\sqrt{\la_n}}g_n\tri_x\vp_n(x)=-\la_n\,\e^{-y\sqrt{\la_n}}g_n\vp_n(x).
\]
Therefore each summand satisfies
\[
\pa_y^2\left(\e^{-y\sqrt{\la_n}}g_n\vp_n(x)\right)+\tri_x\left(\e^{-y\sqrt{\la_n}}g_n\vp_n(x)\right)=0.
\]
Since the sum defining $U_N$ is finite, we may differentiate term by term and obtain
\begin{equation}\label{eq UN_harmonic}
\pa_y^2U_N+\tri_xU_N=0\quad\mbox{in }\Om\times(0,\infty).
\end{equation}

Let \(K\Subset\Om\times(0,\infty)\), and there exist an open set \(\Om_0\Subset\Om\) and numbers \(0<y_0<y_1<\infty\) such that \(K\subset\Om_0\times [y_0,y_1]\). For \(N>M\), Parseval's identity gives, for every \(y\in[y_0,y_1]\),
\[
\|U_N(\,\cdot\,,y)-U_M(\,\cdot\,,y)\|_{L^2(\Om)}^2=\sum_{n=M+1}^N\e^{-2y\sqrt{\la_n}} |g_n|^2\le\sum_{n=M+1}^N |g_n|^2 .
\]
Therefore,
\[
\|U_N-U_M\|_{L^2(\Om_0\times[y_0,y_1])}^2\le\int_{y_0}^{y_1}\|U_N(\,\cdot\,,y)-U_M(\,\cdot\,,y)\|_{L^2(\Om)}^2\,\rd y\le(y_1-y_0)\sum_{n=M+1}^N|g_n|^2 .
\]
According to \(\{g_n\}_{n=1}^\infty\in \ell^2\), the right-hand side tends to zero as \(M,N\to\infty\). Hence \(\{U_N\}\) is Cauchy in \(L^2(\Om_0\times[y_0,y_1])\). Its limit is precisely \(U\), by the definition of the series. Thus
\[
U_N\longrightarrow U\quad\mbox{in }L^2_{\mathrm{loc}}(\Om\times(0,\infty)).
\]

Let \(\Phi\in C_c^\infty(\Om\times(0,\infty))\). From \eqref{eq UN_harmonic}, we have
\[
\int_{\Om\times(0,\infty)}U_N(x,y)(\pa_y^2+\tri_x)\Phi(x,y)\,\rd x\rd y=0.
\]
Since \(\Phi\) has compact support and \(U_N\longrightarrow U\) in \(L^2_{\mathrm{loc}}(\Om\times(0,\infty))\), we may pass to the limit as \(N\to\infty\). This gives
\[
\int_{\Om\times(0,\infty)} U(x,y)(\pa_y^2+\tri_x)\Phi(x,y)\,\rd x\rd y=0.
\]
Therefore
\begin{equation}\label{eq U_distribution_harmonic}
(\pa_y^2+\tri_x)U=0\quad\mbox{in }\cD'(\Om\times(0,\infty)).
\end{equation}
Since \(\pa_y^2+\tri_x=\tri_{x,y},\) where \(\tri_{x,y}\) denotes the usual Laplacian in the variables \((x,y)\in\BR^{d+1}\), equation \eqref{eq U_distribution_harmonic} means that \(U\) is harmonic in \(\Om\times(0,\infty)\) in the distributional sense. The analytic regularity theorem for the Laplacian (see \cite[Theorem 2.27]{1995Introduction}) implies that \(U\) is real analytic in \(\Om\times(0,\infty)\). Moreover, \(b(x)=U(x,\rho)\) for almost every \(x\in\Om\) by \eqref{eq u_poisson}, and the slice \(x\mapsto U(x,\rho)\) is real analytic in \(\Om\), it follows that \(b\) admits a real analytic representative in \(\Om\). Equivalently, after modifying \(b\) on a set of measure zero, \(b\) is real analytic in \(\Om\).
\end{proof}

Now we are in a position to proceed to the proof of the main result.

\begin{proof}[Proof of Theorem $\ref{thm1.1}$]
We first prove the uniqueness of $a$. According to Lemma \ref{lem 4.1} and $p(0)=0$, the coincidence $u_1=u_2$ in $\om\times(0,T)$ gives 
\begin{equation}\label{eq-asymp}
a_1(x)-\f{t^{\al_1}}{\Ga(\al_1+1)}(-\tri)^{\be_1}a_1(x)+o(t^{\al_1})=a_2(x)-\f{t^{\al_2}}{\Ga(\al_2+1)}(-\tri)^{\be_2}a_2(x)+o(t^{\al_2})
\end{equation}
for $x\in\om$ as $t\to0$. Then obviously $a_1=a_2$ in $\om$. Since $a_1$ and $a_2$ are analytic in $\ov\Om$, we simply conclude $a_1=a_2$ in $\ov\Om$ by the uniqueness theorem of analytic functions.

Now we prove $\al_1=\al_2$ by contradiction. Without loss of generality, let us assume $\al_1>\al_2$. Dividing both sides of \eqref{eq-asymp} by $t^{\al_2}$ and passing $t\to0$, we obtain
\[
\f1{\Ga(\al_2+1)}(-\tri)^{\be_2}a=0\quad\mbox{in }\om.
\]
Owing to the analyticity of $(-\tri)^{\be_2}a$ guaranteed by Lemma \ref{lem3.2}, it turns out that $(-\tri)^{\be_2}a\equiv0$ in $\Om$, which contradicts with the assumption that $a\not\equiv0$ is analytic on $\ov\Om$. Therefore, we conclude $\al_1=\al_2$.

Next, we prove the uniqueness of $\be$. Now that $a_1=a_2$ and $\al_1=\al_2$, dividing both sides of \eqref{eq-asymp} by $t^{\al_2}$ and passing $t\to0$ yield
\[
(-\tri)^{\be_1}a=(-\tri)^{\be_2}a\quad\mbox{in }\om,
\]
which, again by Lemma \ref{lem3.2}, indicates
\[
(-\tri)^{\be_1}a=(-\tri)^{\be_2}a\quad\mbox{in }\Om.
\]
Invoking the eigensystem $(\la_n,\vp_n)$ of $-\tri$, we rewrite above as
\[
\sum_{n=1}^\infty\left(\la_n^{\be_1}-\la_n^{\be_2}\right)a_n\vp_n=0\quad\mbox{in }\Om.
\]
Then the orthogonality of $\{\vp_n\}$ implies $(\la_n^{\be_1}-\la_n^{\be_2})a_n=0$ for all $n\in\BN$. Since it was assumed that $a\not\equiv0$ in $\Om$, there exists at least one $n_0\in\BN$ such that $a_{n_0}\ne0$. Therefore, we conclude $\be_1=\be_2$.

Finally, we deduce the uniqueness of the spatial source $f(x)$. Denoting $f_{1,n}=(f_1,\vp_n)$ and $f_{2,n}=(f_2,\vp_n)$, we employ the explicit solution \eqref{eq3.1} in Lemma \ref{lem3.1} to express
\[
\int_0^t p(\tau)\sum_{n=1}^\infty(t-\tau)^{\al-1}E_{\al,\al}\left(-\la_n^\be(t-\tau)^\al\right)(f_{1,n}-f_{2,n})\vp_n\,\rd\tau=0\quad\mbox{in }\om\times(0,T).
\]	
Using Titchmarsh's convolution theorem, it follows from the conditions on $p$ that
\begin{equation}\label{4.19}
\sum_{n=1}^\infty t^{\al-1}E_{\al,\al}\left(-\la_n^\be t^\al\right)f_{1,n}\vp_n=\sum_{n=1}^\infty t^{\al-1}E_{\al,\al}\left(-\la_n^\be t^\al\right)f_{2,n}\vp_n
\end{equation}
in $\om\times(0,T_1)$ for some constant $T_1\in(0,T)$.

On the one hand, for $z\in S_1:=\{z\in\BC\mid|\arg z|\le\min\{\f\pi\al-\f\pi2,\pi\},|z|\ge t_0\}$, it follows from Lemma \ref{lem2.13} that
\begin{equation}\label{4.20}
\sum_{n=1}^\infty z^{\al-1}E_{\al,\al}\left(-\la_n^\be z^\al\right)f_n\vp_n(x)\le C\,t_0^{-1}\|f\|_{L^2(\Om)},\quad\forall\,x\in\ov\Om.
\end{equation}
In other words, the above series is uniformly convergent over $z\in S_1$ on $\ov\Om$. On the other hand, it is known that the Mittag-Leffler function $E_{\al,\al}(-\la_n^\be z^\al)$ is analytic over the domain $S_2:=\{z\in\BC\mid|z|>0,|\arg z|<\pi\}$ by its definition. Thus, equation \eqref{4.19} is analytic over the domain $z\in S_1\cap S_2$ on $\ov\Om$ by the Weierstrass $M$-test. Especially, it can be extended to $\ov\Om\times(0,\infty)$, i.e.,
\begin{equation}\label{4.21}
\sum_{n=1}^\infty t^{\al-1}E_{\al,\al}\left(-\la_n^\be t^\al\right)f_{1,n}\vp_n=\sum_{n=1}^\infty t^{\al-1}E_{\al,\al}\left(-\la_n^\be t^\al\right)f_{2,n}\vp_n\quad\mbox{in }\om\times(0,\infty).
\end{equation}

Let $\{\mu_n\}_{n\in\BN}$ denote the strictly increasing sequence of distinct eigenvalues of $(-\tri)^\be$, and denote by $\{\vp_{n_k}\}_{1\le k\le m_n}$ an orthonormal basis of $\ker(\mu_n-(-\tri)^\be)$. Then we can rewrite \eqref{4.21} as
\begin{equation}\label{4.22}
\sum_{n=1}^\infty\left(\sum_{k=1}^{m_n}f_{1,n_k}\vp_{n_k}\right)a_n(t)=\sum_{n=1}^\infty\left(\sum_{k=1}^{m_n}f_{2,n_k}\vp_{n_k}\right) a_n(t)\quad\mbox{in }\om\times(0,\infty),
\end{equation}
where $a_n(t):=t^{\al-1}E_{\al,\al}(-\mu_n t^\al)$. From \eqref{4.20}, one obtains
\[
\left|\e^{-t\,\rRe\,z}\sum_{n=1}^\infty a_n(t)f_n\vp_n(x)\right|\le C\,\e^{-t\,\rRe\,z}\|f\|_{L^2(\Om)},
\]
where $\e^{-t\,\rRe\,z}$ is integrable in $(0,\infty)$ with fixed $z$ satisfying $\rRe\, z>0$. By Lebesgue's dominated convergence theorem, we can take the Laplace transform on both sides of \eqref{4.22}. By the formula (see \cite{2006Theory})
\[
\mathcal L(a_n(t))(z)=\f1{z^\al+\mu_n},\quad\rRe\,z>0,
\]
we arrive at
\[
\sum_{n=1}^\infty\left(\sum_{k=1}^{m_n}f_{1,n_k}\vp_{n_k}(x)\right)\f1{z^\al+\mu_n}=\sum_{n=1}^\infty\left(\sum_{k=1}^{m_n}f_{2,n_k}\vp_{n_k}(x)\right)\f1{z^\al+\mu_n},\quad x\in\om,\ \rRe\,z>0,
\]
which implies
\begin{equation}\label{4.25}
\sum_{n=1}^\infty\sum_{k=1}^{m_n}f_{1,n_k}\f{\vp_{n_k}(x)}{\eta+\mu_n}=\sum_{n=1}^\infty\sum_{k=1}^{m_n}f_{2,n_k}\f{\vp_{n_k}(x)}{\eta+\mu_n},\quad x\in\om,\ \eta\in S_3,
\end{equation}
where $S_3:=\{z\in\BC\mid|\arg z|<\f{\al\pi}2\}$ and $\eta=z^\al$. As
\[
\sum_{n=1}^\infty\sum_{k=1}^{m_n}\left|f_{n_k}\f1{\eta+\mu_n}\vp_{n_k}(x)\right|\le\sum_{n=1}^\infty\f{\|f_{n_k}\vp_n\|_{L^\infty(\Om)}}{|\eta+\la^\be_n|},
\]
one can see that the above series is uniformly convergence on compact subsets in $\BC\setminus\{-\mu_n\}$. Again by the Weierstrass $M$-test, both sides of \eqref{4.25} are analytic in $\BC\setminus\{-\mu_n\}$. Therefore, one can analytically continue $\eta$ such that \eqref{4.25} holds for $\eta\in\BC\setminus\{-\mu_n\}$.

Now for any $\ell\in\BN$, we can pick a sufficiently small disk centered at $-\mu_\ell$ excluding $\{-\mu_n\}_{n\ne\ell}$. Utilizing Cauchy's integral formula, we integrate \eqref{4.25} along its circle to derive
\[
u_\ell:=\sum_{k=1}^{m_\ell}(f_1-f_2,\vp_{\ell_k})\vp_{\ell_k}=0\quad\mbox{in }\om.
\]
Since $((-\tri)^\be-\mu_\ell)u_\ell=0$ in $\Om$ and $u_\ell=0$ in $\om$, the unique continuation principle of the fractional Laplacian (see \cite{2020The Calderón}) implies $u_\ell\equiv0$ in $\Om$ for all $\ell\in\BN$. Together with the orthogonality of $\{\vp_{\ell_k}\}_{1\le k\le m_\ell}$ in $L^2(\Om)$, we obtain $(f_1-f_2,\vp_{\ell_k})=0$ for all $1\le k\le m_\ell$ and all $\ell\in\BN$. Eventually, we conclude $f_1=f_2$ in $\Om$, which completes the proof.
\end{proof}


\section{Levenberg-Marquardt method}\label{sec4}

In this section, we present the Levenberg-Marquardt method for simultaneously recovering the fractional orders, the initial value, and the spatial component of the source term from observed data. Based on Lemma~\ref{lem3.1}, we define the forward operator
\[
\cF:D(\cF)\ni(f,a,\al,\be)\longmapsto u(\,\cdot\,,\,\cdot\,;f,a,\al,\be)\in L^2(\om\times(0,T)),
\]
where $D(\cF)=L^2(\Om)\times L^2(\Om)\times(0,1)\times(0,1)$ and $u(\,\cdot\,,\,\cdot\,;f,a,\al,\be)$ denotes the solution to \eqref{eq1.1}. 

To recover the spatial component $f$, the initial value $a$, and the fractional orders $\al,\be$ from interior observations of $u$ in $\om\times(0,T)$, we employ the Levenberg-Marquardt method by considering the following minimization problem. Starting from an initial guess $(f^0,a^0,\al^0,\be^0)$ and assuming that the $k$th step approximation $(f^k,a^k,\al^k,\be^k)$ has been obtained, the $(k+1)$th step approximation $(f^{k+1}, a^{k+1},\al^{k+1},\be^{k+1})$ is determined by solving 
\begin{equation}\label{eq5.2}
\left(f^{k+1},a^{k+1},\al^{k+1},\be^{k+1}\right)=\mathop{\arg\min}_{(f,a,\al,\be)\in D(\cF)}J(f,a,\al,\be),
\end{equation}
where
\begin{align*}
J(f,a,\al,\be) & =\f12\left\|\cF\left(f^k,a^k,\al^k,\be^k\right)-u^\de+\cF'_{f^k}\left(f- f^k\right)+\cF'_{a^k}\left(a-a^k\right)\right.\\
& \quad\,\left.+\cF'_{\al^k}\left(\al-\al^k\right)+\cF'_{\be^k}\left(\be-\be^k\right)\right\|_{L^2(\om\times(0,T))}^2+\f{\xi_{k+1}}2\left\|f-f^k\right\|_{L^2(\Om)}^2\\
& \quad\,+\f{\ga_{k+1}}2\left\|a-a^k\right\|_{L^2(\Om)}^2+\f{\rho_{k+1}}2\left|\al-\al^k\right|^2+\f{\eta_{k+1}}2\left|\be-\be^k\right|^2,
\end{align*}
in which $\xi_{k+1},\ga_{k+1},\rho_{k+1},\eta_{k+1}>0$ are the regularization parameters at the $(k+1)$th step, and $\cF_{f^k}',\cF_{a^k}',\cF_{\al^k}',\cF_{\be^k}'$ denote the Fr\'echet derivatives of $\cF$ with respect to $f,a,\al,\be$ at $(f^k,a^k,\al^k,\be^k)$. Moreover, $u^\de$ is the noisy data satisfying
\[
\|u-u^\de\|_{L^2(\om\times(0,T))}\le\de.
\]
Here $u$ and $\de>0$ denote the true solution and the noise level, respectively.

In the following, we adopt a finite-dimensional approximation to solve the variational problem \eqref{eq5.2}. Let $\{\chi_n\}_{n\in\BN}$ be an appropriate basis of $L^2(\Om)$ and set
\[
f^k(x)\approx\sum_{n=1}^N c_n^k\chi_n(x),\quad a^k(x)\approx\sum_{n=1}^N d_n^k\chi_n(x),
\]
with $N\in\BN$ and expansion coefficients $c_n^k,d_n^k$ ($n=1,\dots,N$). We then define
\[
\Phi^N=\mathrm{span}\{\chi_1,\dots,\chi_N\},\quad c^k=\left(c_1^k,\dots,c_N^k\right)\in\BR^N,\quad d^k=\left(d_1^k,\dots,d_N^k\right)\in\BR^N.
\]

Next, we present the inversion algorithm for determining $f$, $a$, $\al$, and $\be$. Let
\[
b^k=\left(c^k,d^k,\al^k,\be^k\right):=\left(b_1^k,b_2^k,\dots,b_{2N+2}^k\right).
\]
Based on the above discussions, solving problem \eqref{eq5.2} can be transformed to solving the following minimization problem:
\begin{align}
\min_{b\in\BR^{2N+2}} &\left\{\f12\left\|u(\,\cdot\,,\,\cdot\,;b^k)-u^\de+\nb_{b^k}u(\,\cdot\,,\,\cdot\,;b^k)(b-b^k)^\T\right\|_{L^2(\om\times(0,T))}^2+\f{\xi_{k+1}}2\left\|f-f^k\right\|_{L^2(\Om)}^2\right.\nonumber\\
& \:\:\:\left.+\f{\ga_{k+1}}2\left\|a-a^k\right\|_{L^2(\Om)}^2+\f{\rho_{k+1}}2\left|\al-\al^k\right|^2+\f{\eta_{k+1}}2\left|\be-\be^k\right|^2\right\},\label{eq5.8}
\end{align}
where
\begin{align*}
\nb_{b^k}u(\,\cdot\,,\,\cdot\,;b^k) & =\left(\f\pa{\pa{b_1^k}}u(\,\cdot\,,\,\cdot\,;b^k),\dots,\f\pa{\pa{b_{2N+2}^k}}u(\,\cdot\,,\,\cdot\,;b^k)\right),\\
\f\pa{\pa{b_n^k}}u(\,\cdot\,,\,\cdot\,;b^k) & \approx\f{u(\,\cdot\,,\,\cdot\,;b_1^k,\dots,b_n^k+\tau,\dots,b_{2N+2}^k)-u(\,\cdot\,,\,\cdot\,;b_1^k,\dots,b_n^k,\dots,b_{2N+2}^k)}\tau.
\end{align*}
Here $\tau$ is the step size of the numerical differentiation.

The minimizer of \eqref{eq5.8} is denoted as ${b^{k+1}=(b_1^{k+1}, b_2^{k+1},\cdots, b_{2N+2}^{k+1})}$. Denoting
\begin{equation}\label{eq5.11}
b^{k+1}=b^k+\de b^k,\quad k=1,2, \dots
\end{equation}
where $\de b^k=(\de b_1^k,\dots,\de b_{2N+2}^k)$ is regarded as a perturbation of $b^k$. Thus, in order to obtain $b^{k+1}$, it suffices to compute an optimal perturbation $\de b^k$. Then the problem \eqref{eq5.8} becomes
\begin{equation}\label{eq5.12}
\min_{\de b^k\in\BR^{2N+2}}\left\{\left\|\nb_{b^k}u(\,\cdot\,,\,\cdot\,;b^k)(\de b^k)^\T-u^\de+u(\,\cdot\,,\,\cdot\,;b^k)\right\|_{L^2(\om\times(0,T))}^2+\de b^k A^k(\de b^k)^\T\right\},
\end{equation}
where
\[
A^k=\mathrm{diag}\left(\xi_{k+1}((\chi_i,\chi_j)_{L^2(\Om)})_{N\times N},\ga_{k+1}((\chi_i,\chi_j)_{L^2(\Om)})_{N\times N},\rho_{k+1},\eta_{k+1}\right).
\]
Let
\begin{align*}
Q^k & :=\left(\left(\f\pa{\pa b_i^k}u(\,\cdot\,,\,\cdot\,;b^k),\f\pa{\pa b_j^k}u(\,\cdot\,,\,\cdot\,;b^k)\right)_{L^2(\om\times(0,T))}\right)_{(2N+2)\times(2N+2)},\\
W^k & :=\left(\left(u^\de-u(\,\cdot\,,\,\cdot\,;b^k),\f\pa{\pa b_i^k}u(\,\cdot\,,\,\cdot\,;b^k)\right)_{L^2(\om\times(0,T))}\right)_{(2N+2)\times1}.
\end{align*}
We readily verify that the minimization problem \eqref{eq5.12} is reduced to solving the following normal equation
\[
(Q^k+A^k)(\de b^k)^\T=W^k.
\]
Then by the iteration procedure \eqref{eq5.11}, the optimal approximate solution can be obtained as long as arriving at the given number of iterations.


\section{Numerical experiments}\label{sec5}

In this section, we present the numerical results for some examples in the {two-dimensional} case to show the effectiveness of the Levenberg-Marquardt regularization algorithm. The noisy data is generated by adding a random perturbation, i.e.,
\[
u^\de(x,t)=(1+\de\,\mathrm{rand}(-1,1))\,u(x,t)\quad\mbox{in }\om\times(0,T),
\]
where $\mathrm{rand}(-1,1)$ denotes a random number distributed uniformly in $(-1,1)$ and the corresponding noise level is $\de$, where $\om$ is the nonempty open subset of $\Om$. To evaluate the accuracy of reconstructed solutions, we compute the relative error defined by
\[
\mathrm{err}=\f{\|f^K-f\|_{L^2(\Om)}}{\|f\|_{L^2(\Om)}}+\f{\|a^K-a\|_{L^2(\Om)}}{\|a\|_{L^2(\Om)}}+\f{|\al^K-\al|}\al+\f{|\be^K-\be|}\be
\]
with the number $K$ of iterations, where $(f^K,a^K,\al^K,\be^K)$ is regarded as the reconstructed solution produced by the Levenberg-Marquardt method. Here $(f,a,\al,\be)$ denotes the exact solution to the inverse problem. The residual $E_k$ at the $k$th iteration is given by
\[
E_k=\left\|u(\,\cdot\,,\,\cdot\,;f^k,a^k,\al^k,\be^k)-u^\de\right\|_{L^2(\om\times(0,T))}.
\]
In an iteration algorithm, usually we have to find a suitable stopping rule. In this study, we employ the well-known discrepancy principle, i.e., we choose $K$ satisfying
\[
E_K\le\si\de<E_{K-1},
\]
where $\si>1$ is a constant and can be taken heuristically to be $1.01$. If the noise level is $0$, then we take $K=20$ in the following examples. Regularization parameters $\xi_k,\ga_k,\rho_k,\eta_k$ are selected as the sigmoid-type function based on its properties given by
\begin{gather*}
\xi_k=\f1{1+\exp(\ka_0(k-k_0))},\quad\ga_k=\f1{1+\exp(\ka_1(k-k_1))},\\
\rho_k=\f1{1+\exp(\ka_2(k-k_2))},\quad\eta_k=\f1{1+\exp(\ka_3(k-k_3))},
\end{gather*}
where $k$ is the iteration step, $k_i$ and $\ka_i>0$ ($i=0,1,3$) are a priori chosen numbers and tuning parameters, respectively.

In the following examples, without loss of generality, we set
\[
p(t)=2\sin(\pi t),\quad\Om=(0,1)^2,\quad T=1.
\]
The mesh size in both space and time are chosen as $0.025$ when solving the direct problem numerically, while that in the minimization is fixed as $\tau=0.02$. For the regularization parameters above, we set
\[
\ka_0=\ka_1=\ka_2=\ka_3=0.8,\quad k_0=5,\quad k_1=4,\quad k_2=3,\quad k_3=2.
\]
We choose the approximate space
\[
\Phi^N=\mathrm{span}\left\{\sqrt2\,\sin(\pi x),\dots,\sqrt2\,\sin(N\pi x)\right\}.
\]
As for the initial guess, we simply put $f^0=a^0=0$ and $\al^0=\be^0=0.1$. The interior observation data of $u$ in $\om\times(0,T)$ is obtained by solving the direct problem \eqref{eq1.1} numerically.

Now we give two numerical examples, in which we assume that $\al=0.4$ and $\be=0.6$. The functions $f(x)$ and $a(x)$ are selected as follows.

\begin{ex}\label{ex1}
\[
f(x)=\exp\left\{-20\left((x_1-0.5)^2+(x_2-0.5)^2\right)\right\}
\]
and
\[
a(x)=\sin(\pi x_1)\sin(\pi x_2).
\]
\end{ex}

\begin{ex}\label{ex2}
\[
f(x)=\exp\left\{-30\left((x_1-0.3)^2+(x_2-0.3)^2\right)\right\}+\exp\left\{-30\left((x_1-0.7)^2+(x_2-0.7)^2\right)\right\}
\]
and
\[
a(x)=\sin(\pi x_1)\sin(\pi x_2)+\sin(2\pi x_1)\sin(2\pi x_2).
\]
\end{ex}

We first set the observation subdomain as $\om=(0.1,0.9)^2$. The inversion fractional orders and the relative errors under various noise levels are reported in Tables \ref{tab1}--\ref{tab2}, respectively, while the reconstructed spatial component $f(x)$ and initial values $a(x)$ are displayed in Figures \ref{fig1}--\ref{fig2}. The numerical results indicate that the fractional orders $\al$ and $\be$ can be recovered with satisfactory accuracy even in the presence of significant noise. In contrast, the reconstruction of the spatial component $f(x)$ and the initial value $a(x)$ proves to be more sensitive to noise. This discrepancy arises primarily from the fact that recovering the orders involves the determination of only two scalar parameters, whereas the reconstruction of the functions $f(x)$ and $a(x)$ requires the identification of multiple coefficients in their basis expansions within the inversion algorithm.  

To further assess the influence of the observation subdomain size on the reconstruction quality, we report the corresponding inversion results with different observation subdomain in Tables \ref{tab1}--\ref{tab2}. As expected, a reduction in the size of the observation area leads to a noticeable degradation in the accuracy of the recovered quantities. It is also observed that the numerical reconstructions agree well with the exact solutions when the data are contaminated by noise levels up to $4\%$. Overall, these results demonstrate that the proposed algorithm performs reliably and exhibits good stability.

\begin{figure}[htbp]\centering
\subfigure[True solution $f$]{\includegraphics[width=0.32\textwidth]{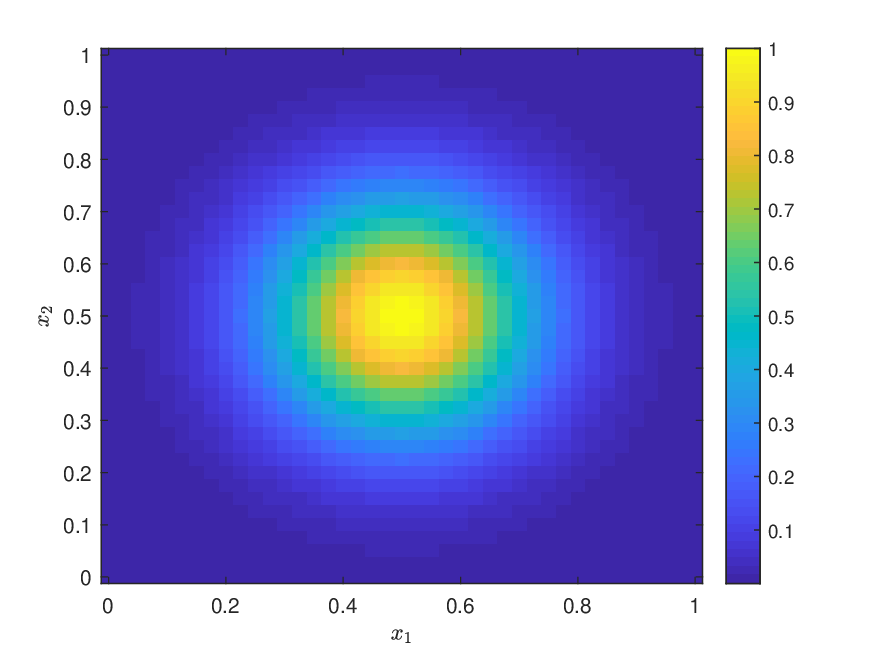}}
\subfigure[Reconstructed solution $f^K$]{\includegraphics[width=0.32\textwidth]{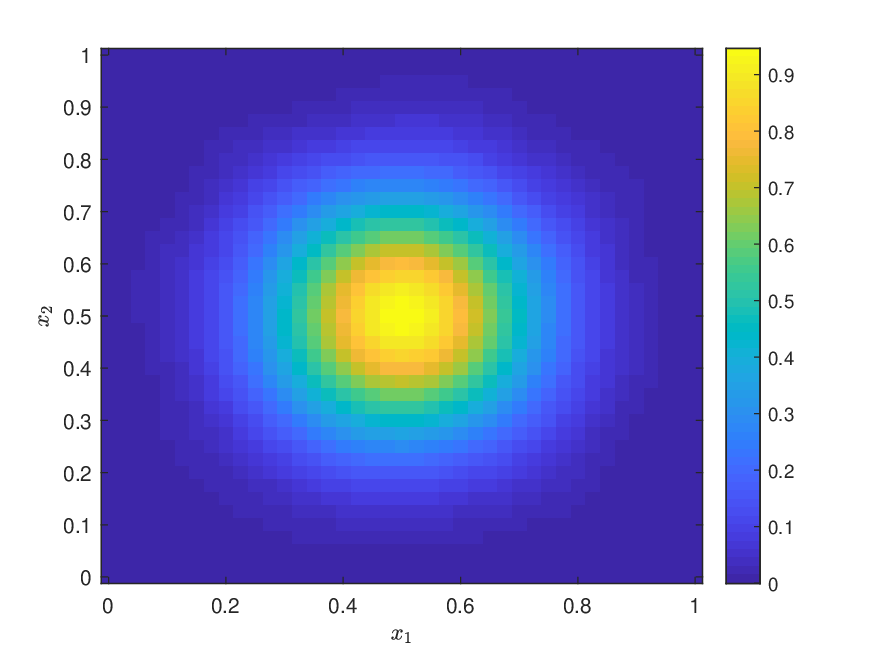}}
\subfigure[Absolute error $|f-f^K|$]{\includegraphics[width=0.32\textwidth]{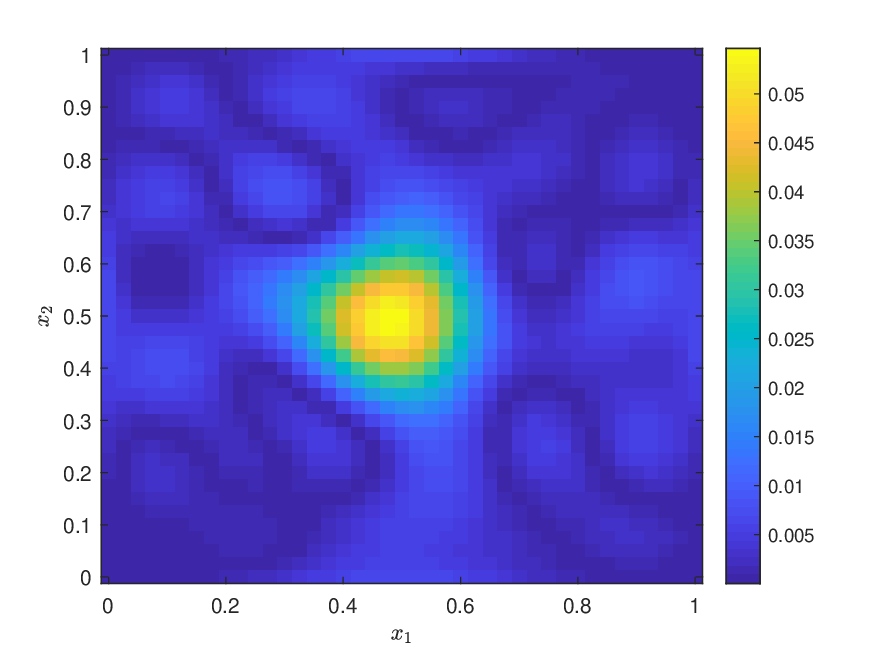}}
\subfigure[True solution $a$]{\includegraphics[width=0.32\textwidth]{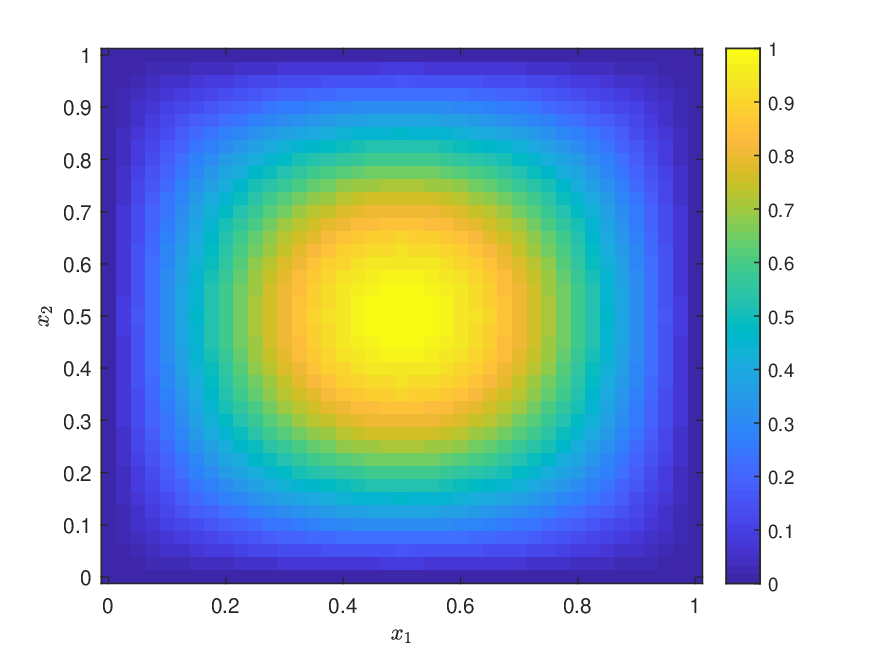}}
\subfigure[Reconstructed solution $a^K$]{\includegraphics[width=0.32\textwidth]{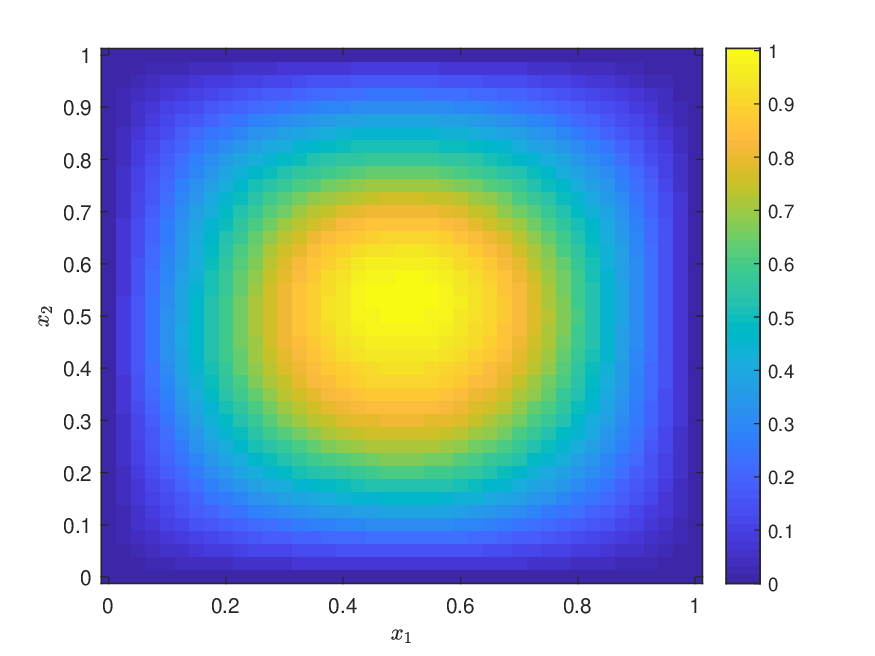}}
\subfigure[Absolute error $|a-a^K|$]{\includegraphics[width=0.32\textwidth]{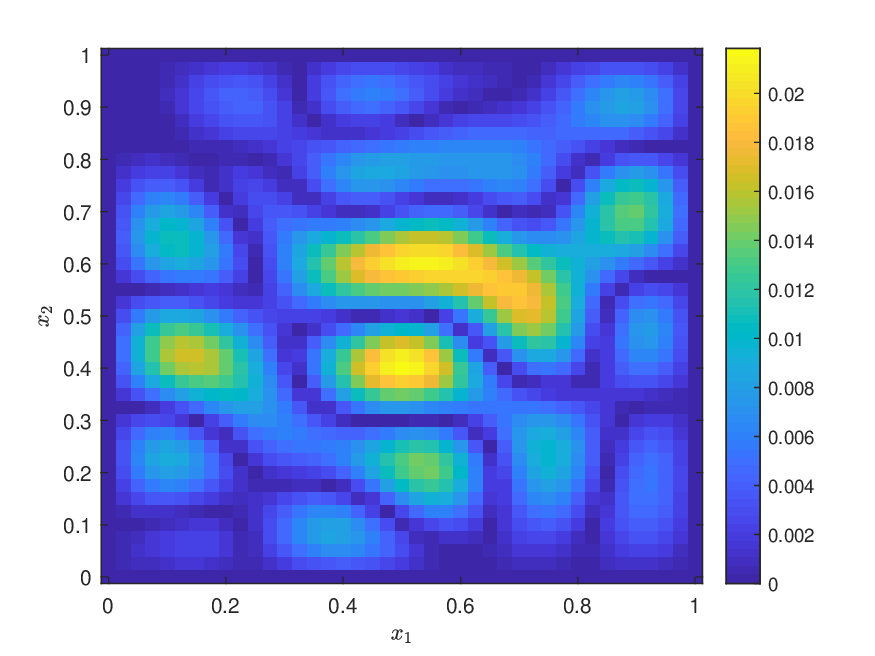}}
\caption{True solution (left), reconstructed solution (middle), and absolute error (right) of Example \ref{ex1} for $\om=(0.1,0.9)^2$ and $\de= 0.04$.}\label{fig1}
\end{figure}

\begin{figure}[htbp]\centering
\subfigure[True solution $f$]{\includegraphics[width=0.32\textwidth]{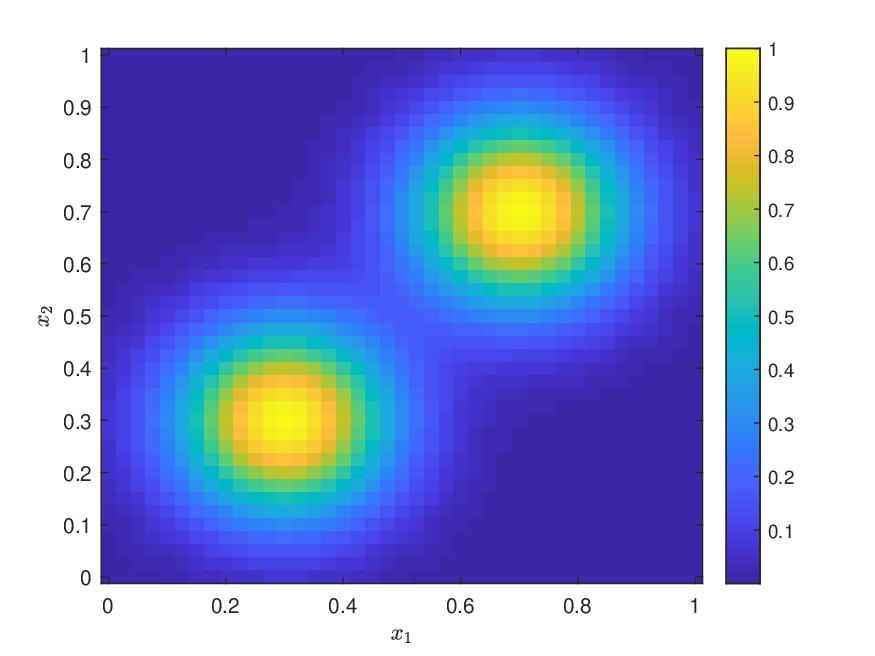}}
\subfigure[Reconstructed solution $f^K$]{\includegraphics[width=0.32\textwidth]{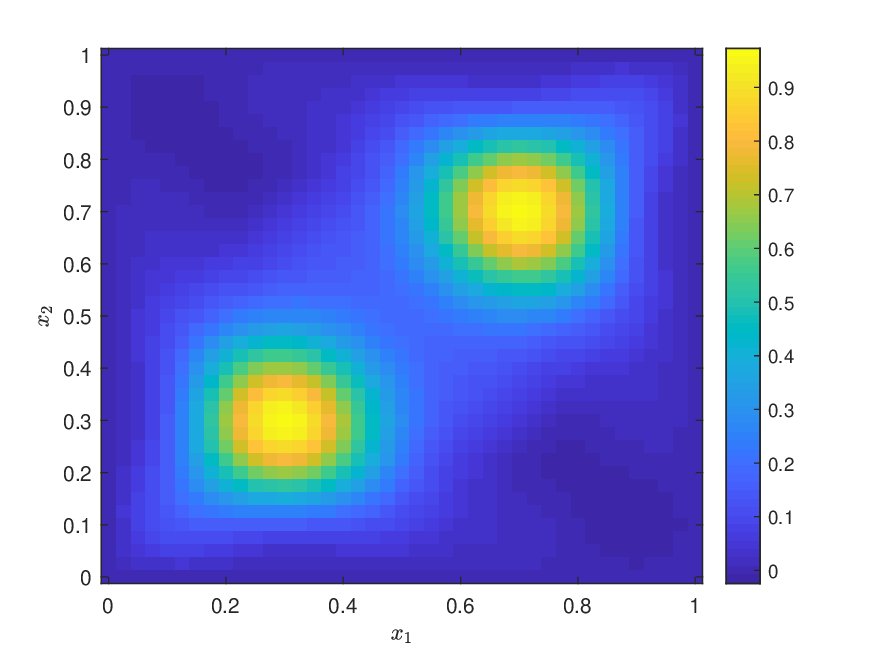}}
\subfigure[Absolute error $|f-f^K|$]{\includegraphics[width=0.32\textwidth]{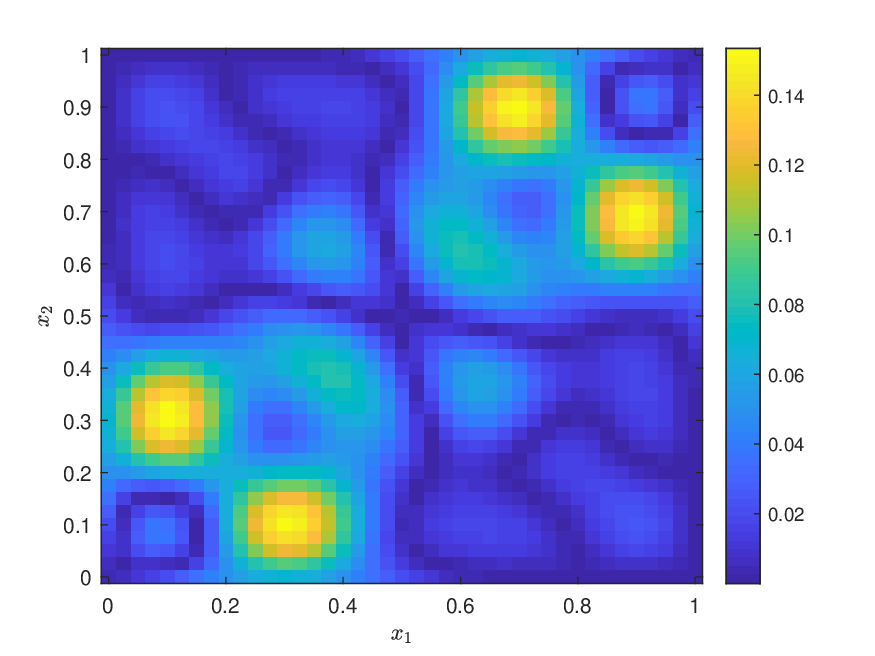}}
\subfigure[True solution $a$]{\includegraphics[width=0.32\textwidth]{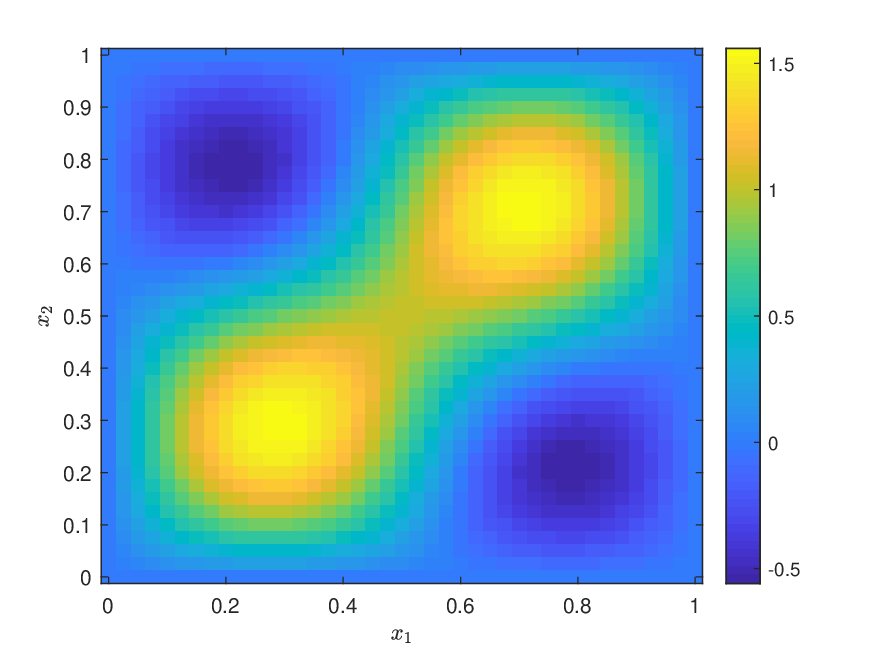}}
\subfigure[Reconstructed solution $a^K$]{\includegraphics[width=0.32\textwidth]{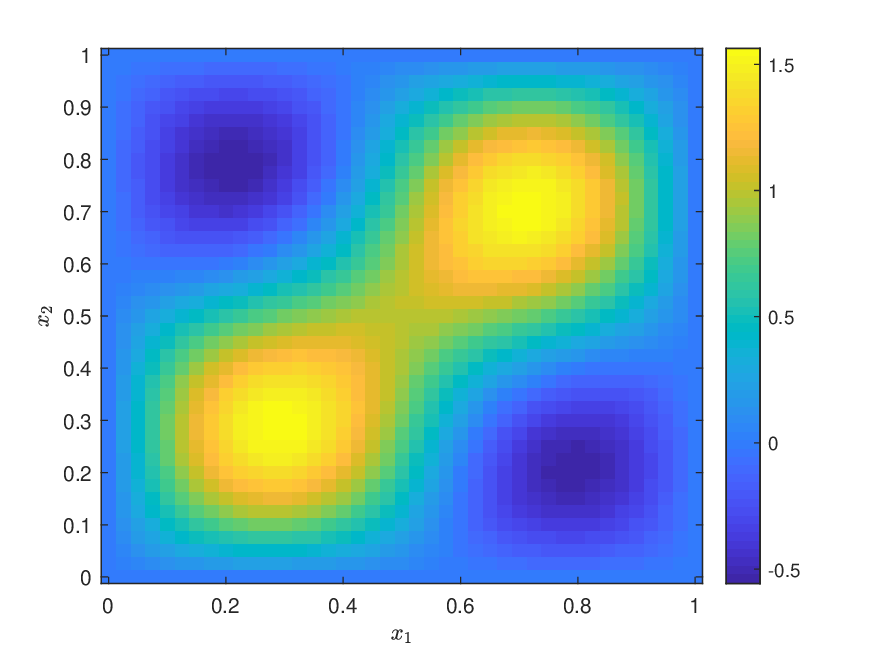}}
\subfigure[Absolute error $|a-a^K|$]{\includegraphics[width=0.32\textwidth]{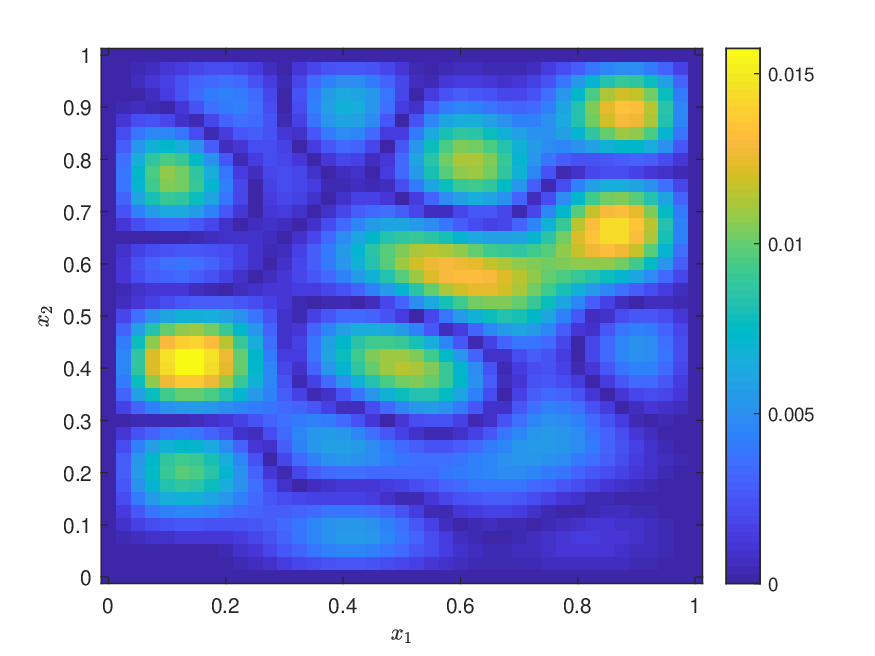}}
\caption{True solution (left), reconstructed solution (middle), and absolute error (right) of Example \ref{ex2} for $\om=(0.1,0.9)^2$ and $\de=0.04$.}\label{fig2}
\end{figure}

\begin{table}[htbp]\centering
\caption{The reconstructed fractional orders and relative errors of Example \ref{ex1} for various noise levels and observation subdomain.\label{tab1}}
\begin{tabular}{cc|cccc}
\toprule 
$\om$ & $\de$ & $K$ & $\al^K$ & $\be^K$ & err\\
\midrule
$(0.1,0.9)^2$ & $0.00$ & 10 & $0.4000$ & $0.6001$ & $0.0529$\\
$(0.1,0.9)^2$ & $0.01$ & 10 & $0.3998$ & $0.5998$ & $0.0568$\\
$(0.1,0.9)^2$ & $0.02$ & 9 & $0.3995$ & $0.5991$ & $0.0662$\\
$(0.1,0.9)^2$ & $0.04$ & 8 & $0.3987$ & $0.5983$ & $0.0824$\\
\midrule
$(0.2,0.8)^2$ & $0.00$ & 11 & $0.3980$ & $0.5965$ & $0.1022$\\
$(0.2,0.8)^2$ & $0.01$ & 11 & $0.3977$ & $0.5963$ & $0.1078$\\
$(0.2,0.8)^2$ & $0.02$ & 9 & $0.3968$ & $0.5954$ & $0.1276$\\
$(0.2,0.8)^2$ & $0.04$ & 8 & $0.3951$ & $0.5936$ & $0.1667$\\
\bottomrule
\end{tabular}
\end{table}

\begin{table}[htbp]\centering
\caption{The reconstructed fractional orders and relative errors of Example \ref{ex2} for various noise levels and observation subdomain.\label{tab2}}
\begin{tabular}{cc|cccc}
\toprule 
$\om$ & $\de$ & $K$ & $\al^K$ & $\be^K$ & err\\
\midrule
$(0.1,0.9)^2$ & $0.00$ & 9 & $0.3996$ & $0.5993$ & $0.1149$\\
$(0.1,0.9)^2$ & $0.01$ & 8 & $0.3991$ & $0.5988$ & $0.1197$\\
$(0.1,0.9)^2$ & $0.02$ & 8 & $0.3965$ & $0.5943$ & $0.1354$\\
$(0.1,0.9)^2$ & $0.04$ & 7 & $0.3943$ & $0.5926$ & $0.1642$\\
\midrule
$(0.2,0.8)^2$ & $0.00$ & 10 & $0.3986$ & $0.5967$ & $0.2071$\\
$(0.2,0.8)^2$ & $0.01$ & 9 & $0.3984$ & $0.5964$ & $0.2187$\\
$(0.2,0.8)^2$ & $0.02$ & 8 & $0.3978$ & $0.5953$ & $0.2304$\\
$(0.2,0.8)^2$ & $0.04$ & 7 & $0.3961$ & $0.5921$ & $0.2743$\\
\bottomrule
\end{tabular}
\end{table}


\section{Concluding remarks}\label{sec6}

This paper addresses the simultaneous recovery of the spatial component of the source term, the initial value, and the fractional orders in space and time for nonlocal diffusion equations from internal measurements. The uniqueness of the corresponding inverse problem is established using the asymptotic behavior of solutions, analytic continuation, the Laplace transform, and properties of analytic functions. The Levenberg-Marquardt method is employed for the numerical reconstruction and, through numerical experiments, is shown to be both effective and stable, thereby providing a promising approach for solving inverse problems in nonlocal diffusion equations.


\section*{Acknowledgments}

This work was partially supported by the National Natural Science Foundation of China (No.\! 12271277) and the Open Research Fund of Key Laboratory of Nonlinear Analysis \& Applications (Central China Normal University), Ministry of Education, China. The second author also thanks Ningbo Youth Leading Talent Project (No.\! 2024QL045). The third author is supported by JSPS KAKENHI Grant Numbers JP23KK0049 and JP26K06926, Guangdong Basic and Applied Basic Research Foundation (No.\! 2025A1515012248) and FY2025 MUSUBIME of Kyoto University.


\bibliographystyle{unsrt}

\begin{thebibliography}{35}

\bibitem{AF03}
R.A. Adams, {\it Sobolev Spaces}, Academic Press, New York (1975).

\bibitem{2018Inverse}
M. Ali, S. Aziz, S. Malik, Inverse source problem for a space-time fractional diffusion equation, {\it Fract. Calc. Appl. Anal.}, {\bf21}(3) (2018): 844--863.

\bibitem{2023RecoveryCen}
S. Cen, B. Jin, Y. Liu, Z. Zhou, Recovery of multiple parameters in subdiffusion from one lateral boundary measurement, {\it Inverse Problems}, {\bf39}(10) (2023): 104001.

\bibitem{1995Introduction}
G. B. Folland, {\it Introduction to Partial Differential Equations}, { Princeton University Press}, 1995.

\bibitem{2020The Calderón}
T. Ghosh, M. Salo, G. Uhlmann, The Calder\'on problem for the fractional Schr\"odinger equation, {\it Anal. \& PDE}, {\bf13}(2) (2020): 455--475.

\bibitem{GL15}
R. Gorenflo, Y. Luchko, M. Yamamoto, Time-fractional diffusion equation in the fractional Sobolev spaces, {\it Fract. Calc. Appl. Anal.}, {\bf18}(3) (2015): 799--820.

\bibitem{2021SimultaneousGuerngar}
N. Guerngar, E. Nane, R. Tinaztepe, S. Ulusoy, H.W. Van Wyk, Simultaneous inversion for the fractional exponents in the space-time fractional diffusion equation $\pa_t^\be u=-(-\tri)^{\al/2}u-(-\tri)^{\ga/2}u$, {\it Fract. Calc. Appl. Anal.}, {\bf24}(3) (2021): 818--847.

\bibitem{2020Inverse problems}
T. Helin, M. Lassas, L. Ylinen, Z. Zhang, Inverse problems for heat equation and space–time fractional diffusion equation with one measurement, {\it J. Differential Equations}, {\bf269} (9) (2020): 7498--7528.

\bibitem{2020Determination}
J. Janno, Determination of time-dependent sources and parameters of nonlocal diffusion and wave equations from final data, {\it Fract. Calc. Appl. Anal.}, {\bf23}(6) (2020): 1678--1701.

\bibitem{2018Backward}
J. Jia, J. Peng, J. Gao, Y. Li, Backward problem for a time-space fractional diffusion equation, {\it Inverse Probl. Imaging}, {\bf12} (3) (2018): 773–-799.

\bibitem{2017Harnack's}
J. Jia, J. Peng, J. Yang, Harnack's inequality for a space–time fractional diffusion equation and applications to an inverse source problem, {\it J. Differential Equations}, {\bf262}(8) (2017): 4415--4450.

\bibitem{2021The}
Y. Kian, Z. Li, Y. Liu, M. Yamamoto, The uniqueness of inverse problems for a fractional equation with a single measurement, {\it Math. Ann.}, {\bf380}(3) (2021): 1465--1495.

\bibitem{2006Theory}
A.A. Kilbas, H.M. Srivastava, J.J. Trujillo, {\it Theory and Applications of Fractional Differential Equations}, Elsevier, Amsterdam (2006).

\bibitem{2020The Calderon}
R. Lai, Y. Lin, A. R\"uland, The Calder\'on problem for a space-time fractional parabolic equation, {\it SIAM J. Math. Anal.}, {\bf52}(3) (2020): 2655--2688.

\bibitem{2018An}
Y. Li, T. Wei, An inverse time-dependent source problem for a time–space fractional diffusion equation, {\it Appl. Math. Comput.}, {\bf336} (2018): 257--271.

\bibitem{2019Inverse problems li}
Z. Li, Y. Liu, M. Yamamoto, Inverse problems of determining parameters of the fractional partial differential equations, {\it Handbook of Fractional Calculus with Applications, Volume 2: Fractional Differential Equations}, De Gruyter, Berlin (2019): 431--442.

\bibitem{2019Inverse li}
Z. Li, M. Yamamoto, Inverse problems of determining coefficients of the fractional partial differential equations, {\it Handbook of Fractional Calculus with Applications, Volume 2: Fractional Differential Equations}, De Gruyter, Berlin (2019): 443--464.

\bibitem{2019Inverse liu}
Y. Liu, Z. Li, M. Yamamoto, Inverse problems of determining sources of the fractional partial differential equations, {\it Handbook of Fractional Calculus with Applications, Volume 2: Fractional Differential Equations}, De Gruyter, Berlin (2019): 411--430.

\bibitem{2022Uniqueness}
Y. Liu, M. Yamamoto, Uniqueness of orders and parameters in multi-term time-fractional diffusion equations by short-time behavior, {\it Inverse Problems}, {\bf39}(2) (2022): 024003.

\bibitem{1993Linear pa}
L. Rodino, {\it Linear Partial Differential Operators in Gevrey Spaces}, World Scientific, Singapore, 1993.

\bibitem{2024Recovering}
K. Lyu, H. Cheng, Recovering a space-dependent source term for distributed order time-space fractional diffusion equation, {\it Numer. Algorithms}, {\bf99} (2024): 1317–-1342.

\bibitem{2021SimultaneousMalik}
S. Malik, A. Ilyas, A. Samreen, Simultaneous determination of a source term and diffusion concentration for a multi-term space-time fractional diffusion equation, {\it Math. Model. Anal.}, {\bf26}(3) (2021): 411--431.

\bibitem{2000The}
R. Metzler J. Klafter, The random walk's guide to anomalous diffusion: a fractional dynamics approach, {\it Phys. Rep.}, {\bf339}(1) (2000): 1--77.

\bibitem{1998Fractional}
I. Podlubny, {\it Fractional Differential Equations}, Academic Press, San Diego (1999).

\bibitem{2002Waiting-times}
M. Raberto, E. Scalas, F. Mainardi, Waiting-times and returns in high-frequency financial data: an empirical study, {\it Phys. A}, {\bf314}(1--4) (2002): 749--755.

\bibitem{2021Direct}
J. Restrepo, D. Suragan, Direct and inverse Cauchy problems for generalized space-time fractional differential equations, {\it Adv. Differential Equations}, {\bf26}(7--8) (2021): 305--339.

\bibitem{2011Initial}
K. Sakamoto, M. Yamamoto, Initial value/boundary value problems for fractional diffusion-wave equations and applications to some inverse problems, {\it J. Math. Anal. Appl.}, {\bf382}(1) (2011): 426--447.

\bibitem{2021Simultaneous}
L. Sun, Y. Li, Y. Zhang, Simultaneous inversion of the potential term and the fractional orders in a multi-term time-fractional diffusion equation, {\it Inverse Problems}, {\bf37}(5) (2021): 055007.

\bibitem{2016Simultaneous}
S. Tatar, R. Tınaztepe, S. Ulusoy, Simultaneous inversion for the exponents of the fractional time and space derivatives in the space-time fractional diffusion equation, {\it Appl. Anal.}, {\bf95} (1) (2016): 1--23.

\bibitem{2015An inverse}
S. Tatar, S. Ulusoy, An inverse source problem for a one-dimensional space–time fractional diffusion equation, {\it Appl. Anal.}, {\bf94}(11) (2015): 2233--2244.

\bibitem{2023The quasi-reversibility}
N.V. Duc, N.V. Thang, N.T. Th\`anh, The quasi-reversibility method for an inverse source problem for time-space fractional parabolic equations, {\it J. Differential Equations}, {\bf344} (2023): 102--130.

\bibitem{2021Unknown}
F. Yang, Q. Wang, X. Li, Unknown source identification problem for space-time fractional diffusion equation: optimal error bound analysis and regularization method, {\it Inverse Probl. Sci. Eng.}, {\bf29}(12) (2021): 2040--2084.

\bibitem{2025Inverse source}
K. Yu, Z. Li, Y. Liu, Inverse source problem with a posteriori interior measurements for space-time fractional diffusion equations, {\it Comput. Math. Appl.}, {\bf200} (2025): 305--314.

\bibitem{2002Subdiffusion-limited}
S. Yuste, K. Lindenberg, Subdiffusion-limited reactions, {\it Chem. Phys.}, {\bf284}(1--2) (2002): 169--180.

\bibitem{2018Bayesian}
Y. Zhang, J. Jia, L. Yan, Bayesian approach to a nonlinear inverse problem for a time-space fractional diffusion equation, {\it Inverse Problems}, {\bf34}(12) (2018): 125002.

\end{thebibliography}

\end{document}